\newcommand{\FOREACH}{\FORALL}
\newcommand{\calC}{\mathcal{C}}
\newcommand{\calD}{\mathcal{D}}
\newcommand{\calQ}{\mathcal{Q}}
\newcommand{\calR}{\mathcal{R}}
\newcommand{\FF}{\mathbb{F}}
\newcommand{\rdC}{\widetilde{C}}
\newcommand{\rdE}{\widetilde{E}}
\newcommand{\rdP}{\widetilde{P}}
\newcommand{\rdKh}{\widetilde{Kh}}
\newcommand{\qC}{\overline{C}}
\newcommand{\qE}{\overline{E}}
\newcommand{\qKh}{\overline{Kh}}
\newcommand{\qbP}{{\overline \bP}}
\newcommand{\undecC}{\overline{\calC}}
\newcommand{\HFhat}{\widehat{HF}}
\DeclareMathOperator{\rank}{rank}
\newcommand{\colonns}{\colon\negmedspace}
\newcommand{\bd}{{\bf d}}
\newcommand{\bt}{{\bf t}}
\newcommand{\bF}{{\bf F}}
\newcommand{\bP}{{\bf P}}
\newtheorem{theorem}{Theorem}[section]
\newtheorem{conjecture}[theorem]{Conjecture}
\newtheorem{lemma}[theorem]{Lemma}
\newtheorem{proposition}[theorem]{Proposition}
\newtheorem{definition}[theorem]{Definition}
\title{Computations of Szab\'o's Geometric Spectral Sequence in Khovanov Homology}
\author{Cotton Seed}
\begin{document}

\maketitle

\begin{abstract}
  Szab\'o recently introduced a combinatorially-defined spectral
  sequence in Khovanov homology \cite{szaboss}.  After reviewing its
  construction and explaining our methodology for computing it, we
  present results of computations of the spectral sequence.  Based on
  these computations, we make a number of conjectures concerning the
  structure of the spectral sequence, and towards those conjectures,
  we prove some propositions.
\end{abstract}

\section{Introduction}

Khovanov homology associates to a knot or link $L$ in $S^3$ a bigraded
abelian group $Kh(L)$ which categorifies the unnormalized Jones
polynomial of $L$ \cite{khovanov}, \cite{bncat}.  The hat variant of
Heegaard Floer homology associates to a closed, oriented $3$-manifold
$Y$ an abelian group $\HFhat(Y)$.  In \cite{oszdouble}, Ozsv\'ath and
Szab\'o constructed a spectral sequence $E_{HF}^k = E^k_{HF}(L)$ from
the the reduced Khovanov homology $\rdKh(L)$ of $L$ to the Heegaard
Floer homology $\HFhat(-\Sigma(L))$ with $\FF_2$ coefficients
throughout, where $\Sigma(L)$ denotes the double cover of $S^3$
branched along $L$.  Baldwin \cite{baldwin} showed the higher pages of
this spectral sequence are invariants of $L$.  Let $\calD$ be a planar
diagram for $L$.  The construction of the $E_{HF}^k$ associates terms
in a differential of a filtered chain complex determined by counting
certain pseudoholomorphic polygons to higher faces of the cube of
resolutions of $\calD$.

Bloom constructed a spectral sequence in Khovanov homology in the
context of monopole Floer homology \cite{bloomss} and Kronheimer and
Mrowka in the context of instanton Floer homology \cite{kronmorwss}.

Lipshitz, Ozsv\'ath, and Thurston, using techniques from bordered
Floer homology \cite{lotbordered}, gave an algorithm to compute
$\HFhat(Y)$ \cite{lothfhat} and, more generally, the spectral sequence
$E_{HF}^k$ \cite{lotss1}, \cite{lotss2}.  Lipshitz developed a program
in Sage \cite{sage} to compute $\HFhat(\Sigma(L))$ \cite{lotbordprog}.
Zhan ported this program to C++ and completed support for computing
the full spectral sequence $E_{HF}^k$ \cite{zhan}.

In \cite{szaboss}, Szab\'o introduced a combinatorially-defined
spectral sequence $E^k(L)$ whose $E^2$-page is the the Khovanov
homology $Kh(L)$ and proved it is an invariant of $L$ for $k\ge 2$.
Like Khovanov homology, the spectral sequence admits a reduced variant
$\rdE(L, c)$ associated to a link with distinguished component $c$.
Like $E_{HF}^k$, the spectral sequence is constructed by associating
to higher faces of the cube of resolutions certain terms of a filtered
chain complex differential.  However, whereas the terms in the
Heegaard Floer, monopole and instanton spectral sequences involve
certain analytic considerations that make computation difficult,
Szab\'o's construction is purely combinatorial.  The author has
developed software to compute the spectral sequence $E^k$ and the aim
of this paper is to describe the results of those computations.

Zhan computed $\HFhat(-\Sigma(K))$ for all knots with at most 14
crossings except for the 11 knots $14n5631$--$5635$, $14n5637$,
$14n5643$--$5645$, $14n6285$ and $14n6302$.  He also computed the
spectral sequence $E_{HF}^k(K)$ for all knots with at most 12
crossings.  In each case, our computation of $\rdE^\infty$ and
$\rdE^k$ matched his, respectively.  This evidence, along with the
further computations and conjectures presented in Section 4, suggest
the following two conjectures:
\begin{conjecture}\label{conj1}
  Let $K$ be a knot in $S^3$.  The spectral sequence $\rdE^k(K)$
  collapsed onto the homological grading and $E_{HF}^k(K)$ are
  isomorphic as graded vector spaces.
\end{conjecture}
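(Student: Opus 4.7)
The plan is to prove the conjecture by constructing a filtered quasi-isomorphism between the two filtered chain complexes underlying $\rdE^k(K)$ and $E_{HF}^k(K)$, both realized on the cube of resolutions of a fixed diagram $\calD$ for $K$. A chain-level identification would in fact yield the stronger statement that the two spectral sequences agree as filtered objects from some page onward, not merely as graded vector spaces on each page. As a warm-up, I would restrict attention to the reduced versions, since both constructions admit natural reduced variants and the reduced complexes are simpler.

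First I would fix a multi-pointed Heegaard diagram for $-\Sigma(K)$ arising from $\calD$ in the manner of Ozsv\'ath--Szab\'o \cite{oszdouble}, in which the generators at each vertex of the cube are in canonical bijection with Khovanov generators, and in which the higher-face maps of $E_{HF}^k$ are given by counts of pseudoholomorphic polygons. On the combinatorial side, Szab\'o's higher-face maps are given by the explicit local rules of \cite{szaboss}, phrased in terms of configurations of resolution circles near the changing crossings. The $E^2$-pages are already identified, both being $\rdKh(K)$, so the problem is to match the higher face maps.

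The key technical step would be to carry out this matching face by face in the cube. I would attempt this via the bordered Floer machinery of Lipshitz--Ozsv\'ath--Thurston \cite{lotss1}, \cite{lotss2}: their combinatorial model replaces the polygon counts by actions of type-$D$ and type-$DA$ bimodules associated to tangle pieces of $\calD$, so the polygon counts become in principle computable symbol manipulations. The idea is to show that, after a suitable choice of bases, these bimodule actions reproduce Szab\'o's local formulas on each face of the cube. An intermediate and more tractable target is to establish the identification on low-codimension faces and to verify, for all the diagrams within reach of the software, that the higher faces agree; the computations already reported in the paper provide strong evidence that this holds.

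The main obstacle will be the localization mismatch between the two descriptions. Szab\'o's rules are local in the planar configuration of resolution circles, while the bordered-Floer computation of $E_{HF}^k$ is local in a tangle decomposition of $\calD$, and there is no a priori reason the two local presentations must agree term-by-term. A natural first reduction, both to test the approach and to get partial results, is to prove the conjecture for classes of knots where both sides are determined by classical invariants, such as quasi-alternating knots, for which both $\rdKh$ is thin and $\HFhat(-\Sigma(K))$ has rank $\det(K)$; here both spectral sequences must collapse at $E^2$ for grading reasons, and the conjecture should follow from the agreement of $E^2$-pages together with a rank count.
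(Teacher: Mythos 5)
This statement is not proved in the paper at all: it is Conjecture~\ref{conj1}, stated only as a conjecture and supported by computational evidence (agreement with Zhan's computations of $E_{HF}^k$ for knots through 12 crossings and of $\HFhat(-\Sigma(K))$ through 14 crossings). So there is no proof in the paper for your proposal to be compared against, and your proposal itself is not a proof either; it is a research plan whose central step is left entirely open. The decisive claim --- that after a suitable choice of bases the bordered type-$D$/type-$DA$ bimodule actions of Lipshitz--Ozsv\'ath--Thurston reproduce Szab\'o's local configuration formulas face by face on the cube of resolutions --- is exactly the content of the conjecture (indeed of a strictly stronger, filtered chain-level statement), and you give no argument for it beyond the same computational evidence the paper already cites. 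Checking low-codimension faces and diagrams within reach of software cannot close this: the higher-face maps in the Heegaard Floer construction depend on analytic choices (almost complex structures, handleslide polygons) and the bordered model depends on a tangle decomposition, so nothing forces term-by-term agreement with Szab\'o's planar-local rules; a genuinely new idea (for instance, an axiomatic characterization of the filtered complex, or an identification of both sides with a third model) would be needed, and you do not supply one.

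Two smaller points. First, the quasi-alternating reduction you propose is essentially already in the paper in a weaker form: the paper proves that $\rdE^k$ collapses at $\rdE^2$ for $\delta$-thin links (via the $\delta$-degree $-2$ property of the higher differentials), and Ozsv\'ath--Szab\'o proved collapse of $E_{HF}^k$ for quasi-alternating links; combining these with the common $E^2$-page does give the conjecture for that class, but it is far from the general statement. Second, be careful with gradings: $E_{HF}^k$ carries no quantum grading in general, which is why the conjecture compares $\rdE^k$ only after collapsing onto the homological grading; any chain-level identification you build must be compatible with that filtration, not with the bigraded structure on Szab\'o's side.
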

\noindent which would imply the weaker conjecture:
\begin{conjecture}\label{conj2}
  Let $K$ be a knot in $S^3$.  The rank of $\rdE^\infty(K)$ is equal
  to the rank of $E_{HF}^\infty(-\Sigma(K)) = \HFhat(-\Sigma(K))$.
\end{conjecture}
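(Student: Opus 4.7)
The plan is to attempt the stronger Conjecture~\ref{conj1}, since it implies Conjecture~\ref{conj2} immediately: collapsing a finite-rank bigraded group onto its homological grading preserves total rank. Both $\rdE^k(K)$ and $E_{HF}^k(-\Sigma(K))$ arise from filtered chain complexes built on the cube of resolutions of a diagram $\calD$ for $K$, and both have $E^2$-page isomorphic to $\rdKh(K)$. The goal is therefore to construct a filtered quasi-isomorphism between the two complexes inducing the identity on $\rdKh(K)$.

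The core step is a face-by-face comparison of the two cube differentials. Szab\'o's higher face maps are specified by a purely local combinatorial recipe, while the Heegaard Floer face maps count pseudoholomorphic polygons in a symmetric product. To bring these into a common framework, I would pass through the bordered Floer reformulation of Lipshitz, Ozsv\'ath and Thurston \cite{lotss1}, \cite{lotss2}, which expresses $E_{HF}^k$ as an iterated $\mathcal{A}_\infty$-tensor product of combinatorially defined bordered invariants associated to tangle decompositions of $\calD$. One then seeks an explicit filtered chain homotopy equivalence between Szab\'o's local model and the bordered one on each face of the cube; standard homological algebra promotes such a face-by-face match to an isomorphism of filtered complexes, and hence of every page of the spectral sequence, yielding Conjecture~\ref{conj1}.

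The main obstacle is precisely this face-by-face identification: matching Szab\'o's combinatorial local maps against the Lipshitz-Ozsv\'ath-Thurston bordered face maps is a substantive open problem already implicit in \cite{szaboss}, requiring a careful translation between Szab\'o's local configurations of arcs and disks and the algebraic structure of the torus algebra appearing in bordered Floer homology. As a more tractable intermediate target aimed only at Conjecture~\ref{conj2}, one could first establish the rank equality for quasi-alternating knots, where both $\mathrm{rank}\,\rdE^\infty(K)$ and $\mathrm{rank}\,\HFhat(-\Sigma(K))$ are expected to equal $|\det(K)|$ with all higher differentials vanishing, and then try to extend by skein or surgery arguments using the computational data of Section~4 to build larger families in which the ranks can be checked independently.
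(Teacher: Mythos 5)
There is a genuine gap here, and it is the whole argument: the statement you are asked about is Conjecture~\ref{conj2} of the paper, which the paper does \emph{not} prove. The only support offered in the paper is computational --- agreement between $\rdE^\infty$, $\rdE^k$ and Zhan's computations of $\HFhat(-\Sigma(K))$ and $E_{HF}^k(K)$ for knots through $14$ and $12$ crossings respectively --- so there is no proof of the paper's to compare against. Your proposal likewise does not constitute a proof: you route through the stronger Conjecture~\ref{conj1} (correctly noting that it implies Conjecture~\ref{conj2}), but the decisive step of your plan, the face-by-face identification of Szab\'o's combinatorial face maps with the Lipshitz--Ozsv\'ath--Thurston bordered face maps, is exactly the open problem, and you say so yourself. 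A proof attempt whose central lemma is acknowledged to be unestablished is a research program, not a proof, so the statement remains unproved under your proposal just as it does in the paper.

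Two smaller points on the fallback you sketch. The quasi-alternating case is indeed fine as a special case: the paper proves that $\rdE^k$ collapses at $\rdE^2$ for $\delta$-thin links (using that the higher differentials drop the $\delta$-grading by $2$), and Ozsv\'ath--Szab\'o proved the corresponding collapse for $E_{HF}^k$, so both ranks equal $\det(K)$ there; but this gives nothing beyond the thin case. The suggestion to ``extend by skein or surgery arguments'' is where the plan has no content: the rank of an $E^\infty$-page is not controlled by unoriented skein exact triangles page by page (differentials can die or appear under a crossing change or resolution), and no exact triangle for the higher pages of Szab\'o's spectral sequence is available in the paper or in \cite{szaboss} that would let you propagate the rank equality from resolutions of a crossing to the original knot. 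If you want to make genuine progress toward Conjecture~\ref{conj2} short of Conjecture~\ref{conj1}, you would need a structural result of that kind, or at least a filtered map between the two complexes inducing an isomorphism on $E^2\cong\rdKh(K)$, after which the mapping lemma (as used in the paper's proof of Proposition~\ref{babytwinarrows}) would do the rest; constructing such a map is precisely the hard part you have deferred.
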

It is natural to expect these conjectures to hold for links also;
however, at this point we have only limited computational evidence for
the more general case.

\bigskip

\noindent {\bf Organization.}  In Section 2 we introduce notation and
briefly review the construction of Szab\'o's geometric spectral
sequence $E^k$.  In Section 3, we describe and give pseudocode for the
algorithm we use to compute the spectral sequence.

In Section 4, we describe the results of the computations, make a
number of conjectures and prove several propositions concerning the
structure of $E^k$.  In particular, our main result is that the two
ways of defining the reduced spectral sequence, as a sub- or quotient
complex, agree.  Specifically, let $C(\calD, \bt)$ denote the filtered
chain complex which induces the spectral sequence $E^k(L)$.  Let $P$
be a point on $\calD$ which corresponds to a distinguished component
$c$ of $L$.  There is a subcomplex $C(\calD, \bt, P)$ of $C(\calD,
\bt)$ associated to $P$ which induces $\rdE^k(L, c)$.  Let $\qC(\calD,
\bt, P)$ denote the quotient complex $C(\calD, \bt)/C(\calD, \bt, P)$.
The quotient complex induces a spectral sequence $\qE^k(L, c)$.  We
prove the following proposition.
\begin{proposition}\label{babytwinarrows}
  Let $(L, c)$ be a link with distinguished component.  Then we have
  \[ \rdE^k(L, c)\cong \qE^k(L, c) \]
  for all $k \ge 2$.
\end{proposition}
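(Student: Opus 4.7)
My plan is to exhibit an explicit filtered chain map (or, more modestly, a map between the associated graded pieces that lifts to a filtered quasi-isomorphism at the $E^2$ level) that witnesses the equivalence. Over $\FF_2$, the generators of $C(\calD,\bt)$ are enhanced resolutions: for each vertex $v$ of the cube of resolutions we get a labeling of the circles of the resolution by $\{1,x\}$. The subcomplex $C(\calD,\bt,P)$ is spanned by those enhanced states in which the circle through $P$ carries the label $x$, and hence the quotient $\qC(\calD,\bt,P)$ is spanned (as a vector space) by those enhanced states in which the circle through $P$ carries the label $1$. There is therefore a tautological bijection $\Phi$ of generating sets, obtained by toggling the label on the marked circle, and the task is to understand to what extent $\Phi$ intertwines the Szab\'o differential with itself.

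First I would check the easiest case, namely the Khovanov edge maps (the $E^1$-page differential). On a merge of the marked circle with an unmarked one, relabeling commutes with the Frobenius multiplication in the expected way; on a split of the marked circle, exactly one of the two output states has the marked label equal to $x$ and the other equal to $1$. From this one sees that $\Phi$ is a chain isomorphism between the $E^2$-pages $\rdE^2(L,c)\cong Kh(L,c)\cong \qE^2(L,c)$. The spectral sequence comparison theorem then reduces Proposition~\ref{babytwinarrows} to checking that $\Phi$ induces a chain map on the $E^2$-page (and then inductively on each subsequent page), so the real content is showing that the higher-face maps in Szab\'o's construction transform covariantly under the label toggle on the marked circle.

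The main obstacle is precisely this compatibility with the higher terms in the differential. Szab\'o's construction assigns to each multi-dimensional face of the cube a collection of terms indexed by configurations of arrows and decorations, and these terms mix labels of circles in nontrivial ways. The plan is to examine each configuration type and verify that terms contributing to the label on the marked circle pair off in canonical twin pairs: one term that takes the marked label from $x$ to $x$ in $C(\calD,\bt,P)$ and a twin term that takes the marked label from $1$ to $1$ in $\qC(\calD,\bt,P)$, corresponding under $\Phi$. Configurations that would change the marked label should either vanish (because they would leave the subcomplex or enter it from outside) or cancel in pairs. This twin-arrows bookkeeping is suggested by the label of the proposition and should follow from a case analysis of the Szab\'o configurations restricted to the marked circle, using that an $x$-labeled circle and a $1$-labeled circle play symmetric roles once we forget the distinction.

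Once the twin-pairing is established face by face, the induced map $\Phi_*$ is a chain map on every page of the spectral sequences, and is an isomorphism already at $E^2$ by the Khovanov-level check. The comparison theorem for spectral sequences then yields $\rdE^k(L,c)\cong \qE^k(L,c)$ for all $k\ge 2$, proving the proposition.
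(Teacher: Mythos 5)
Your starting point is reasonable---the map you call $\Phi$, toggling the label on the marked circle, is on quotient representatives exactly the Shumakovitch map $X(a)=x(P)a$, and it is true that $X$ is a chain map for the Khovanov differential inducing $\qKh(L)\cong\rdKh(L)$, hence an isomorphism of $E^2$-pages. The genuine gap is in the step you identify as ``the real content'': the hope that the higher-face maps in Szab\'o's differential ``transform covariantly under the label toggle,'' with offending terms vanishing or cancelling in twin pairs. This is false on the nose. The label $1$ and the label $x$ do not play symmetric roles in the configuration maps of types $A,B,C,D,E$ (the grading rule alone forces an asymmetry, and the marked point breaks any residual symmetry), and the bare map $X$ does not commute with the higher differentials $\bd^k(\bt)$ for $k\ge 1$. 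The paper's proof has to correct $X$ by nonzero higher-order terms: it builds a map $\bP(\bt)$ with $\bP^0(\bt)=X$ together with explicitly defined components $P_\calC$ on $1$-dimensional, $2$-dimensional, and an infinite family $P_{p,q}$ of higher-dimensional configurations, all anchored at the marked point. Moreover, even with these corrections the chain-map identity does not hold for an arbitrary decoration: the proof establishes a deformation lemma expressing how $\bP(\bt)$ changes under a change of decoration via the edge homotopies $H_m$, and then shows face by face that some decoration makes the identity hold, transferring the result to all decorations. None of this bookkeeping is visible from a case analysis of how a single configuration interacts with the marked label, which is why your ``twin pairs'' cancellation cannot be completed as stated.

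A secondary problem is your use of the comparison theorem. Producing isomorphisms page by page is not enough: the mapping lemma for spectral sequences requires a morphism of filtered complexes (or at least a genuine morphism of spectral sequences), since the differentials on $E^k$ for $k\ge 2$ are not given by the original face maps but arise from cancellation through intermediate generators. Your $\Phi$ is not a filtered chain map for $\bd(\bt)$, so it does not induce maps on the higher pages at all; this is precisely what the corrected map $\bP(\bt)$ provides. Once $\bP(\bt)$ is known to be a chain map whose image lies in $\rdC(\calD,\bt,P)$ and which kills $\rdC(\calD,\bt,P)$, it descends to $\qbP(\bt)\colon \qC(\calD,\bt,P)\to\rdC(\calD,\bt,P)$, Shumakovitch's theorem gives the isomorphism at $E^2$, and only then does the mapping lemma yield $\qE^k(L,c)\cong\rdE^k(L,c)$ for $k\ge 2$. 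To repair your argument you would have to either construct these correction terms yourself or find some other filtered quasi-isomorphism; the toggle alone will not do.
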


Finally, in Appendix A we give the the reduced Poincar\'e polynomial
of the spectral sequence for knots with at most 11 crossings and small
torus knots with nontrivial higher differentials.

\bigskip

\noindent {\bf Acknowledgments.}  I wish to thank Zolt\'an Szab\'o for
suggesting this project and for his patience and encouragement.  I
also wish to thank Zolt\'an and Bohua Zhan for sharing various
computations of Heegaard Floer homology while searching for
counterexamples to Conjectures~\ref{conj1} and \ref{conj2}.  Finally,
I also wish to thank Sam Lewallen and William Cavendish for helpful
discussions.

\section{Szab\'o's Geometric Spectral Sequence}

We briefly review the construction of Szab\'o's geometric spectral
sequence \cite{szaboss}.  Let $L$ be an oriented knot or link in $S^3$
and $\calD$ a diagram for $L$ in $S^2$ with $n$ crossings.  The
spectral sequence $E^k(L)$ is constructed by defining higher
differentials on the Khovanov chain complex associated to the faces of
the cube of resolutions.  These higher differentials depend on a
certain choice of orientation at each crossing, denoted $\bt$.  The
spectral sequence $E^k(L)$ is induced from a filtered chain complex
$C(\calD, \bt) = (C_\calD, \bd(\bt))$.  The group $C_\calD$ is the
same group underlying the Khovanov complex.

\begin{figure}
  \begin{center}
    \includegraphics{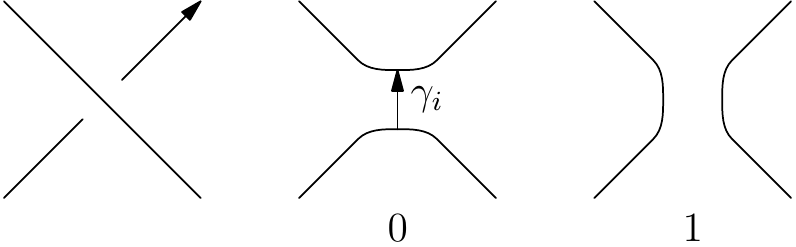}
  \end{center}
  \caption{The $0$ and $1$ resolutions associated to a
    crossing.\label{resolutionsfig}}
\end{figure}

We begin by recalling the definition of $C_\calD$.  Associated to a
crossing are to resolutions: $0$ and $1$, see
Figure~\ref{resolutionsfig}.  A resolution of $\calD$ is a choice of
resolution for each crossing.  A resolution gives a collection of
disjoint, embedded circles in the 2-sphere.  By ordering the crossings
of $\calD$, we can identify the set $\calR$ of resolutions of $\calD$
with $\{0, 1\}^n$.  Let $x$ be a circle in the sphere.  Let $V(x)$
denote the vector space over $\FF_2$ generated by $1$ and $x$.  Define
the grading $gr$ on $V(x)$ by
\begin{align*}
  gr(1) = 1 \\
  gr(x) = -1.
\end{align*}
If $S = (x_1, \dotsc, x_t)$ is a collection of circles in
the sphere, define
\[ V(S) = \bigotimes_{i=1}^t V(x_i). \]
The grading $gr$ on $V(x_i)$ induces a grading on $V(S)$.  The group
$C_\calD$ is given by
\[ C_\calD = \bigoplus_{I\in \calR} V(I). \]
The group $C_\calD$ carries a bigrading.  Let $x$ be a monomial in
$V(I)$.  The homological grading of $x$ is given by
\[ h(x) = |I| - n_- \]
and its quantum grading is
\[ q(x) = gr(x) + |I| + n_+ - 2n_-, \]
where $|I|$ denotes the number of 1 digits in $I$, and $n_-$ and $n_+$
are the number of negative and positive crossings of $\calD$,
respectively.  There is also a $\delta$ grading given by
\[ \delta(x) = q(x) - 2h(x). \]

We now recall the construction of the differential $\bd(\bt)$.  In the
$0$-resolution, there is an arc between the segments of the resolution
such that surgery along this arc gives the $1$-resolution.  Let $\bt$
denote a choice of orientation of the $0$-resolution surgery arc for
each crossing.  The pair $(\calD, \bt)$ is called a decorated diagram.

The differential is defined in terms of configurations.  A
$k$-dimensional configuration $\calC = (x_1, \dotsc, x_t, \gamma_1,
\dotsc, \gamma_k)$ is a collection of embedded circles $(x_1, \dotsc,
x_t)$ in $S^2$ together with $k$ embedded, oriented arcs $\gamma_1,
\dotsc, \gamma_k$ such that the circles and the interior of the arcs
are all disjoint and the endpoints of the arcs lie on the circles.
Recall the following operations on configurations:
\begin{itemize}
\item {\em undecorated configuration.}  The undecorated configuration
  $\bar{\calC}$ is obtained from $\calC$ by forgetting the orientation of
  the arcs.

\item {\em dual configuration.}  The dual configuration $\calC^* = (y_1,
  \dotsc, y_s, \gamma_1^*, \dotsc, \gamma_k^*)$ is the configuration
  obtained from $\calC$ by performing surgery along the arcs
  $\gamma_i$.  The dual arcs $\gamma_i^*$ are obtained by rotating the
  arcs $\gamma_i$ counterclockwise by 90 degrees.

\item {\em reverse configuration.} The reverse configuration $r(\calC)$ is
  obtained from $\calC$ by reversing the orientation of the arcs
  $\gamma_i$.

\item {\em mirror configuration.}  The mirror configuration $m(\calC)$
  is obtained from $\calC$ by reversing the orientation of the ambient
  2-sphere.

\end{itemize}

The circles $x_i$ are called the starting circles of $\calC$.  The
circles $y_i$ of $\calC^*$ are called the ending circles of $\calC$.
Set 
\[ V_0(\calC) = V(x_1, \dotsc, x_t), \qquad V_1(\calC) = V(y_1, \dotsc, y_s). \]

Let $P(\calC) = (x_{i_1}, \dotsc, x_{i_k})$ be the collection of
circles of $\calC$ which are disjoint from the arcs; they are called
the passive circles of $\calC$.  Let $\calC_0$ denote the
configuration obtained by deleting the passive circles; this is called
the active part of $\calC$.  A configuration with no passive circles
is called purely active.  There are decompositions
\[ V_0(\calC) = V_0(\calC_0)\otimes P(\calC), \qquad V_1(\calC) = V_1(\calC_0)\otimes P(\calC). \]

Let $(I, J)$ be a $k$-face of the cube of resolutions.  The
$k$-dimensional configuration $\calC(I, J, \bt) = (x_1, \dotsc, x_t,
\gamma_1, \dotsc, \gamma_k)$ consists of the circles $(x_1, \dotsc,
x_t)$ of $I$ together with the $0$-resolution surgery arcs
$\gamma_{i_1}, \dotsc, \gamma_{i_k}$ corresponding to the coordinates
$i_1, \dotsc, i_k$ where $I$ and $J$ differ and with orientation given
by $\bt$.

Given a map $F_\calC : V_0(\calC)\to V_1(\calC)$ for each
configuration $\calC$, there is a induced map $\bF(\bt)$ on $C_\calD$
given by
\[ \bF(\bt) = \sum_{i=1}^n \bF^k(\bt), \]
where
\[ \bF^k(\bt) = \sum_\text{$k$-faces (I, J)} F_{I, J, \bt} \]
and
\[ F_{I, J, \bt} = F_{\calC(I, J, \bt)}. \]

Next, we define a series of properties of maps the form $F_\calC$.

\begin{definition}[Extension Formula]
  If $F_\calC$ satisfies the formula
  \[ F_\calC(a\cdot v) = F_{\calC_0}(a)\cdot v \]
  for $v\in P(\calC)$ and $a\in V_0(\calC_0)$, then we say it
  satisfies the extension formula.
\end{definition}

In particular, if $F_\calC$ satisfies the extension formula, then it
is determined by its value on purely active configurations.

A configuration $\calC$ is disconnected if the graph with vertices
circles of $\calC_0$ and edges arcs $\gamma_i$ is disconnected.

\begin{definition}[Disconnected rule]
  If $\calC$ is a disconnected configuration and
  \[ F_\calC = 0, \]
  then we say $F_\calC$ satisfies the disconnected rule
\end{definition}

\begin{definition}[Naturality rule]
  Let $\calC$ and $\calC'$ be configurations such that there exists an
  orientation preserving diffeomorphism of $S^2$ carrying $\calC$ to
  $\calC'$.  The diffeomorphism induces an identification between
  $V_0(\calC) = V_0(\calC')$ and $V_1(\calC) = V_1(\calC')$.  If,
  under these identifications, we have
  \[ F_\calC = F_{\calC'}, \]
  then we say $F_\calC$ satisfies the naturality rule.
\end{definition}

\begin{definition}[Filtration rule]
  Let $\calC$ be a configuration and $P$ a point on a starting circle
  away from the endpoints of the arcs.  Let $x(P)$ denote the starting
  circle containing $P$ and $y(P)$ the ending circle containing $P$.
  Let $a\in V_0(\calC)$ and $b\in V_1(\calC)$ be monomials.  If
  $F_\calC(a, b) = 1$ and $x(P)$ divides $a$ implies $y(P)$ divides
  $b$, then we say $F_\calC$ satisfies the filtration rule.
\end{definition}

\begin{definition}[Grading rule]
  Let $\calC$ be a $k$-dimensional configuration, $a\in V_0(\calC)$
  and $b\in V_1(\calC)$ monomials.  If $F_C(a, b) = 1$ implies
  \[ gr(b) - gr(a) = k - 2, \]
  then we say $F_\calC$ satisfies the grading rule.
\end{definition}

\begin{figure}
  \begin{center}
    \includegraphics{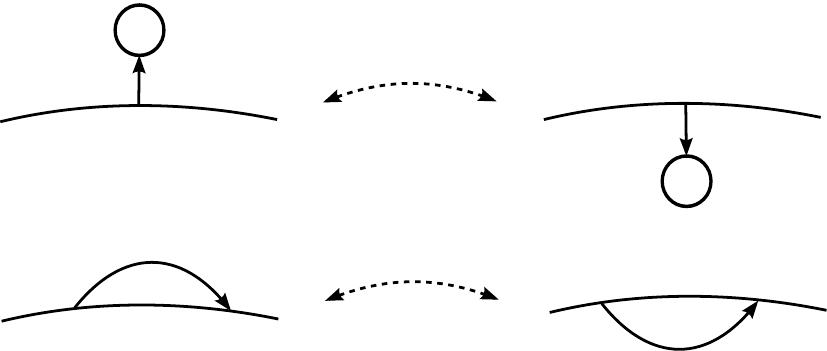}
  \end{center}
  \caption{The two rotation moves.\label{rotfig}}
\end{figure}

\begin{definition}[Rotation rule]
  Suppose two configurations $\calC$ and $\calC'$ differ by one of the
  rotation moves in Figure~\ref{rotfig}.  There is a natural
  identification $V_0(\calC) = V_0(\calC')$ and $V_1(\calC) =
  V_1(\calC')$.  If $F_\calC = F_{\calC'}$, then we say $F_\calC$
  satisfies the rotation rule.
\end{definition}

\begin{figure}
  \begin{center}
    \includegraphics{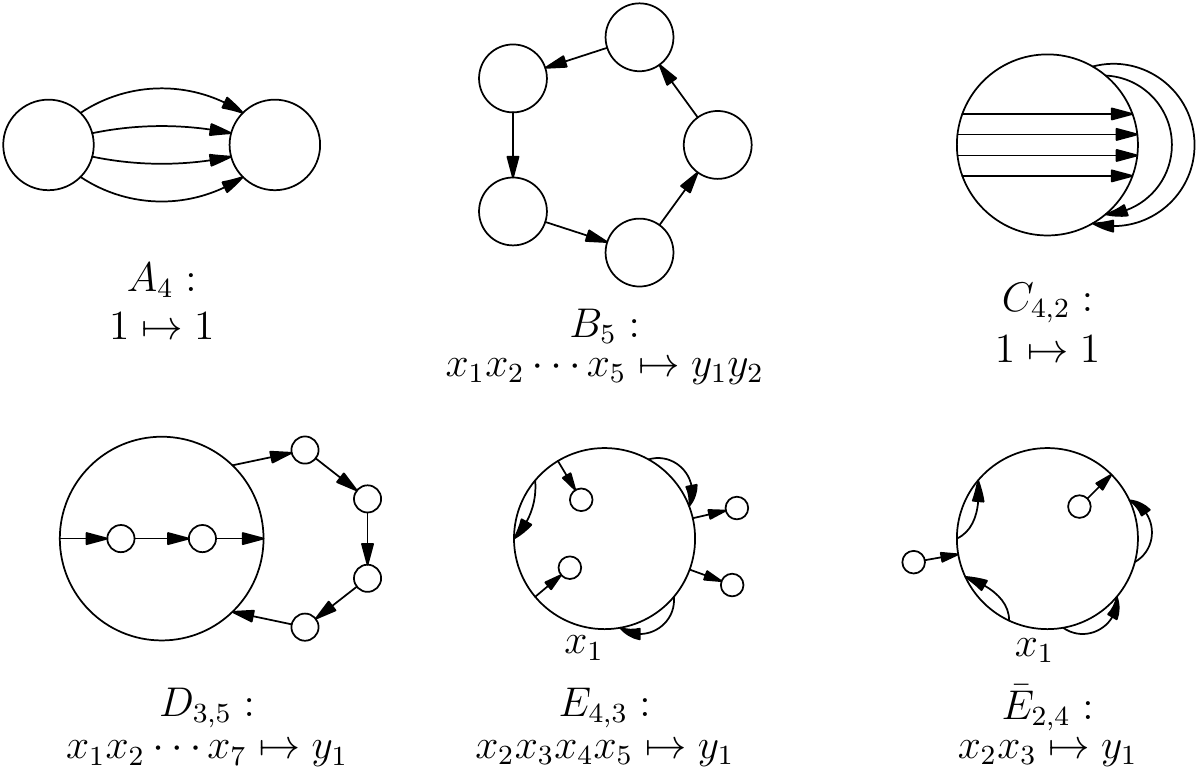}
  \end{center}
  \caption{Examples of the five families of
    configurations.\label{familiesfig}}
\end{figure}

The map $d_\calC$ defining the differential $\bd(\bt)$ is given by
five families $A_k$, $B_k$, $C_{p,q}$, $D_{p,q}$ and $E_{p,q}$, $p + q
= k$, of $k$-dimensional configurations for which $d_\calC\neq 0$.
Examples of those five families are given in Figure~\ref{familiesfig}.
For the precise definition of $d_\calC$, we refer the reader to
\cite{szaboss}.  The map $d_\calC$ satisfies all of the above
properties.  Note, $d_\calC$ satisfies several additional properties,
including the conjugation and duality rules, see \cite{szaboss}.  As
these properties are not used in the sequel, we will not recall their
definition here.

The grading rule implies that $\bd^k(\bt)$ has homogeneous degree $(k,
2k - 2)$ and $\delta$ degree $-2$.  Thus, the total differential
$\bd(\bt)$ has $\delta$ degree $-2$.  The homological grading induces
a filtration on $C_\calD$.

\begin{theorem}[Szab\'o \cite{szaboss}]
  The map $\bd(\bt)$ is a differential, that is,
  \[ \bd(\bt)\circ \bd(\bt) = 0. \]
  The spectral sequence $E^k(L)$ induced from the filtration coming
  from the homological grading is an invariant of the oriented link
  $L$ for $k\ge 2$.
\end{theorem}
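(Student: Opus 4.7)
The plan is to address the two assertions in turn. For $\bd(\bt) \circ \bd(\bt) = 0$, I would work face by face on the cube of resolutions: the identity is equivalent to showing that for each $m$-face $(I, J)$ the sum $\sum_K d_{K, J, \bt} \circ d_{I, K, \bt}$ over intermediate resolutions $K$ vanishes, for every choice of $\bt$. The natural reduction is in three steps. First, use the extension and disconnected rules to restrict attention to purely active, connected configurations $\calC(I, J, \bt)$. Second, use the naturality and rotation rules to cut the list down to a small number of topological types. Third, for each such type, enumerate the pairs of sub-configurations realizing factorizations through the five families $A_k$, $B_k$, $C_{p,q}$, $D_{p,q}$, $E_{p,q}$ and verify that the induced sum on each input/output monomial cancels over $\FF_2$. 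For $m = 2$ the identity reduces to a small number of local relations that essentially characterize $d_\calC$ on two-dimensional configurations; the grading rule then bounds the number of monomial pairs to be checked for each higher configuration, so that the case analysis remains finite as $m$ grows.

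For invariance of $E^k(L)$ for $k \ge 2$, the plan is the standard Reidemeister move argument, carried out in the filtered setting. For each of R1, R2, R3, I would construct an explicit filtered chain map between $(C_\calD, \bd(\bt))$ and $(C_{\calD'}, \bd(\bt'))$, modeled on the classical Khovanov and Bar-Natan cancellations of acyclic summands associated with the extra crossings. The extension rule should allow these cancellations to extend canonically from the Khovanov piece $\bd^1(\bt)$ to the full configuration-counting differential $\bd(\bt)$. Because the homological-grading filtration on $C_\calD$ is bounded, any filtered chain map which induces an isomorphism on $E^2$ automatically induces an isomorphism on $E^k$ for every $k \ge 2$, so it suffices to check that the induced map on $E^2$ recovers the classical Khovanov Reidemeister equivalence.

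The main obstacle, as in all such invariance arguments, will be the Reidemeister III move. One must verify that the proposed filtered map really is a chain map with respect to the full differential $\bd(\bt)$, not merely the Khovanov part $\bd^1(\bt)$, which forces a careful comparison of how higher-dimensional configurations $\calC(I, J, \bt)$ interact with the three additional crossings in the enlarged diagram. I expect this to require substantial bookkeeping and repeated use of the rotation and naturality rules to identify analogous local pictures in the two diagrams, together with the filtration rule to control where the new higher-dimensional terms land. By contrast, the identity $\bd(\bt)^2 = 0$, though lengthy when performed exhaustively, should be essentially mechanical once the factorization patterns through the families $A$, $B$, $C$, $D$, $E$ have been tabulated.
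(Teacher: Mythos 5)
First, note that the paper does not prove this theorem at all: it is stated with the attribution ``Szab\'o \cite{szaboss}'' and used as a black box, with the reader referred to \cite{szaboss} even for the precise definition of $d_\calC$. So there is no in-paper argument to compare yours against; what follows measures your outline against what such a proof must contain.

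Your sketch has the right overall shape for $\bd(\bt)\circ\bd(\bt)=0$ (reduce to purely active connected configurations by the extension and disconnected rules, normalize by naturality and rotation, then cancel the factorizations through the families $A$--$E$ over $\FF_2$), but two points are genuine gaps rather than deferred bookkeeping. First, your claim that the case analysis ``remains finite as $m$ grows'' is unsupported: the number of topological types of connected $m$-dimensional configurations is unbounded in $m$, so one cannot literally enumerate them; what is needed is a uniform structural description of which two-step factorizations through the five families can occur inside an arbitrary configuration, together with a pairing of the resulting terms --- not a type-by-type check. Second, and more importantly, your invariance argument omits an entire ingredient: the complex $C(\calD,\bt)$ depends on the decoration $\bt$ (the orientations of the surgery arcs), so before any Reidemeister argument one must show that the filtered homotopy type, hence $E^k$ for $k\ge 2$, is independent of $\bt$. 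This is not a formality; it is exactly the content of Theorem 5.4 of \cite{szaboss}, proved via the edge homotopy maps $H_m$, and the present paper leans on precisely that statement in the proof of Lemma~\ref{deformlem} and the lemma that follows it. The Reidemeister portion of your plan is otherwise standard and plausible (filtered maps inducing the Khovanov equivalences on $E^2$, then the mapping lemma together with boundedness of the filtration), but as written the decoration-independence step --- the reason the higher pages are well defined in the first place --- is absent.
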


Finally, we recall two related constructions: the reduced version and
the mirror version.  Let $L$ be a link with distinguished component
$c$.  Let $P$ be a point of $\calD$ on the distinguished component
away from the crossings.  For a resolution $I$, let $x(P)$ denote the
circle meeting $P$.  Let $C(\calD, \bt, P)$ be the subcomplex of
$C(\calD, \bt)$ generated by monomials divisible by $x(P)$.  It is a
subcomplex by the filtration rule.  It is traditional to shift the
quantum grading up by $1$ in the reduced subcomplex.  Szab\'o showed
the reduced spectral sequence $\widetilde{E}^k(L, c)$ for $k\ge 2$ is
an invariant of the pair $(L, c)$.

\begin{figure}
  \begin{center}
    \includegraphics{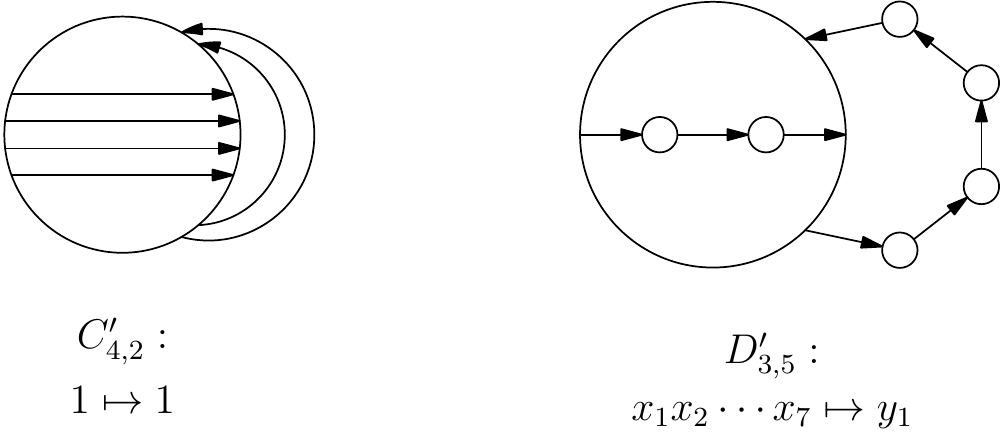}
  \end{center}
  \caption{Examples of mirror configurations of type $C'$ and $D'$.\label{altdfig}}
\end{figure}

There is an implicit choice of orientation in the configurations of
type $C$ and $D$.  Saab\'o notes there is an alternate construction
${\bf d'}$ of the differential where the map $d'_\calC$ is given by
\[ d'_\calC = d_{m(\calC)}. \]
This amounts to choosing the mirror configuration types $C'$ and $D'$
in the definition of $d'_\calC$, see Figure~\ref{altdfig}.  The other
configuration types are invariant under taking the mirror.  We denote
by ${E'}^k(L)$ the mirror spectral sequence coming from the mirror complex
$C'(\calD, \bt) = (C_\calD, {\bf d'}(\bt))$.

\section{Computing The Spectral Sequence}

We now describe the algorithm used for computing the spectral
sequence.  We begin with an decorated planar link diagram $(\calD,
\bt)$.

\begin{algorithm}
\caption{Algorithm to construct the chain complex $(C_\calD, \bd(\bt))$.}
\label{dalg}
\begin{algorithmic}
\STATE $\bd(\bt) \leftarrow 0$
\FOREACH{face $(I, J)$ of $\calR$}
\STATE $\calC \leftarrow \calC(I, J, \bt)$
\IF{$\calC_0$ has type $A, B, C, D$ or $E$}
\FOREACH{generator $a$ of $V(\calC_0)$ and $p$ of $P(\calC)$}
\STATE $\bd(\bt)(a\cdot p) \leftarrow \bd(\bt)(a\cdot p) + d_{\calC_0}(a)\cdot p$
\ENDFOR
\ENDIF
\ENDFOR
\RETURN $\bd(\bt)$
\end{algorithmic}
\end{algorithm}

The first step is to build the chain complex $C(\calD, \bt)$.  As with
Khovanov homology, the combinatorial description of the chain group
$C_\calD$ and differential $\bd(\bt)$ is amenable to direct
computation.  In our implementation, we naively follow the
combinatorial description.  The generators of $C_\calD$ are
represented by pairs $I\colonns m$ with $I\in {\bf 2}^n$ and $m\in
{\bf 2}^t$ where $t = t(I)$ is the number of circles in the resolution
$I$.  To calculate $\bd(\bt)$ we simply sum the contributions
$d_{\calC(I, J)}$ for each face $(I, J)$ of $\calR$.  Pseudocode is
given in Algorithm~\ref{dalg}.

After building the chain complex, the next step is compute the
spectral sequence.  We use repeated application of the cancellation
lemma as outlined by Baldwin in \cite[Section 4]{baldwin}.

\begin{lemma}[Cancellation Lemma]
  Let $(C, d)$ be a chain complex freely generated by $\{x_i\}$.
  Let $d(x_i, x_j)$ denote the coefficient of $x_j$ in $d(x_i)$.
  Suppose $d(x_k, x_\ell) = 1$.  Let $(C', d')$ be the complex where
  $C'$ is generated by $\{x_i | i\neq k, \ell\}$ and the differential
  $d'$ given by
  \[ d'(x_i) = d(x_i) + d(x_i,x_\ell)d(x_k). \]
  The $(C, d)$ is chain homotopy equivalent to $(C', d')$.
\end{lemma}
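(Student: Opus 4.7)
The plan is to exhibit an explicit strong deformation retract from $(C, d)$ onto $(C', d')$ via Gaussian elimination, showing that after a change of basis $(C, d)$ splits as the direct sum of $(C', d')$ with a $2$-dimensional acyclic complex. Since the paper works over $\FF_2$, all signs vanish.

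First, I would change basis on $C$ by setting $y_\ell = d(x_k)$. Because $d(x_k, x_\ell) = 1$, this substitution is invertible over $\FF_2$. In the new basis $\{x_k, y_\ell\}\cup\{x_i : i\neq k,\ell\}$ the span $A = \langle x_k, y_\ell\rangle$ is a subcomplex with $d(x_k) = y_\ell$ and $d(y_\ell) = d^2(x_k) = 0$, so $A$ is acyclic.

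Second, to split $A$ off as a direct summand, I would further modify the remaining basis elements by $y_i = x_i + d(x_i, x_\ell)\,x_k$ for $i\neq k,\ell$. Then
\[ d(y_i) \;=\; d(x_i) + d(x_i, x_\ell)\,d(x_k) \;=\; d(x_i) + d(x_i, x_\ell)\,y_\ell, \]
and the $y_\ell$-component on the right cancels the $y_\ell$-contribution produced by expanding $x_\ell = y_\ell + \sum_{j\neq\ell} d(x_k, x_j)\,x_j$ inside $d(x_i)$. Re-expressing each remaining $x_j$ as $y_j + d(x_j, x_\ell)\,x_k$ then writes $d(y_i)$ as a sum of $y_j$'s plus a residual $x_k$-term. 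The residual term vanishes by $d^2(x_i) = 0$, and the coefficient of each $y_j$ comes out to $d(x_i, x_j) + d(x_i, x_\ell)\,d(x_k, x_j)$, matching the formula for $d'$ given in the lemma under the identification $y_i\leftrightarrow x_i$.

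Hence $(C, d) \cong A\oplus (C', d')$ as chain complexes in the modified basis. Since $A$ is acyclic, the inclusion sending $x_i\mapsto y_i$ is a chain homotopy equivalence, with explicit homotopy inverse the projection killing $A$ and explicit chain homotopy $h$ defined by $h(y_\ell) = x_k$ and $h = 0$ on all other basis elements; verifying $dh + hd = \mathrm{id} - \iota\pi$ is a short direct check. The main obstacle is the bookkeeping to verify that $d(y_i)$ has no residual $x_k$-component after all substitutions, which reduces to comparing coefficients against $d^2(x_i) = 0$ and is the only place where one must track contributions carefully.
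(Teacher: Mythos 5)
Your proof is correct. The paper states the Cancellation Lemma without proof, citing Baldwin's Section 4, and your Gaussian-elimination argument --- splitting off the acyclic summand $\langle x_k, d(x_k)\rangle$ after the change of basis $y_i = x_i + d(x_i,x_\ell)\,x_k$, with the residual $x_k$-component of $d(y_i)$ killed by $d^2 = 0$ --- is exactly the standard argument that the citation points to, and every step checks out.
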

We say that $(C', d')$ is obtained from $(C, d)$ by canceling the term
$d(x_k, x_\ell)$.  The cancellation lemma admits a refinement for
filtered complexes.  This refinement, together with the mapping lemma
for spectral sequences, establishes the following process for
computing the spectral sequence.  The pair $(E^0, d^0)$ is simply
$(C_\calD, \bd^0(\bt))$.  Then we cancel the terms of $\bd(\bt)$ that
preserve the homological grading, to obtain a new complex which, by
abuse of notation, we also call $(C_\calD, \bd(\bt))$.  The pair
$(E^1, d^1)$ is then $(C_\calD, \bd^1(\bt))$, where $\bd^1(\bt)$
denotes the terms of $\bd(\bt)$ which increase the homological grading
by $1$.  Then we cancel the terms of $\bd(\bt)$ which shift the
homological grading by $1$, and $(E^2, \bd^2(\bt))$ is $(C_\calD,
\bd^2(\bd))$.  This process terminates when all the terms of the
differential are canceled and $\bd(\bt) = 0$.  This will always
happen since the homological degree of $C_\calD$ has bounded support.
Pseudocode for the process of canceling terms to compute the spectral
sequence is given in Algorithm~\ref{ssalg}.

\begin{algorithm}
\caption{Algorithm to compute the spectral sequence $E^k$ associated
  to the filtered chain complex $(C_\calD, \bd(\bt))$.}
\label{ssalg}
\begin{algorithmic}
\STATE $i \leftarrow 0$
\WHILE{$\bd(\bt)\neq 0$}
\STATE $(E^i, d^i) \leftarrow (C_\calD, \bd^i(\bt))$
\WHILE{$\bd(\bt)(x_k, x_\ell) = 1$ for some $k, \ell$ with $h(x_l) - h(x_k) = i$}
\STATE cancel $\bd(\bt)(x_k, x_l)$ in $(C_\calD, \bd(\bt))$
\ENDWHILE
\STATE $i\leftarrow i + 1$
\ENDWHILE
\RETURN $E^k$
\end{algorithmic}
\end{algorithm}

Both the time and space complexity of the naive algorithm are
exponential in the number of crossings of the diagram.  In practice,
it is feasible to compute the $E^k$ for knots with 18--19 crossings on
a computer with 12Gb of RAM.  The author has developed a second
program to compute $E^k$ based on a partial construction of the
spectral sequence for tangles obtained by adapting algebraic
techniques from bordered Floer homology \cite{lotslicing}.  We plan to
describe this construction in a subsequent paper.

In \cite{bntangle}, \cite{bnfast}, Bar-Natan introduced a fast
divide-and-conquer algorithm for computing Khovanov homology.
Although there is no analysis of its algorithmic complexity, its
running time appears to depend heavily on the girth of the knot, with
girth $14$ the upper end of the feasible range \cite{freedmanetal}.
It would also be interesting to see if Bar-Natan's formulation could
be extended to yield a fast algorithm for computing Szab\'o's spectral
sequence.

\section{Results and Conjectures}

In this section, we present results of computations of the spectral
sequence $E^k$.  Unless stated otherwise, the conjectures in this
section have been verified on all primes links with $12$ or fewer
crossings, all prime knots with $14$ or fewer crossings, and all torus
knots $T_{p,q}$ where $(p - 1)q \le 16$.  We used knot data from two
sources.  We extracted the planar diagram (PD) description of the
Rolfsen knot tables from Bar-Natan's {\tt KnotTheory`} package
\cite{bnknotth}.  In addition, we used the HTW knot tables
\cite{htwknots} and the Thistlethwaite link (MT) tables and from
SnapPy \cite{snappy}.  The HTW knot tables and MT link tables are
encoded with Dowker-Thistlethwaite (DT) codes.  The HTW tables
included all prime knots through 16 crossings and the MT tables
include all prime links through 14 crossings.

In order to simplify presentation of the results of the computations,
we begin with the following two conjectures.

\begin{conjecture}
  The reduced theory does not depend on the choice of distinguished
  component, that is, if $L$ is a link and $c$ and $c'$ are components
  of $L$, then
  \[ \rdE^k(L, c) \cong \rdE^k(L, c') \]
  for $k\ge 2$.
\end{conjecture}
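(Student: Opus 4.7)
The plan is to upgrade Shumakovitch's $\FF_2$-independence of reduced Khovanov homology to Szab\'o's spectral sequence by constructing a suitable filtered chain map and then invoking the mapping lemma for spectral sequences.

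At the $E^2$ page, both spectral sequences compute reduced Khovanov homology: $\rdE^2(L, c) = \rdKh(L, c)$ and $\rdE^2(L, c') = \rdKh(L, c')$, and these are isomorphic over $\FF_2$ by Shumakovitch's theorem. By the mapping lemma, the conjecture would follow from producing a filtered chain map (or, more flexibly, a zigzag of filtered chain maps passing through Proposition~\ref{babytwinarrows}) whose induced map on $E^2$ realizes the Shumakovitch isomorphism. A natural first attempt is the composition
\[ C(\calD, \bt, P) \hookrightarrow C(\calD, \bt) \twoheadrightarrow C(\calD, \bt)/C(\calD, \bt, P'), \]
after using Proposition~\ref{babytwinarrows} to replace $\rdE^k(L, c')$ with $\qE^k(L, c')$.

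A more symmetric alternative is to introduce the doubly reduced subcomplex $C(\calD, \bt, P, P')$ generated by monomials divisible by both $x(P)$ and $x(P')$ (a subcomplex by two applications of the filtration rule) and compare $\rdE^k(L, c)$ and $\rdE^k(L, c')$ through a two-basepoint generalization of the twin-arrows argument of Proposition~\ref{babytwinarrows}. The natural candidate chain maps here are multiplication by $x(P')$ from $C(\calD, \bt, P)/C(\calD, \bt, P, P')$ to $C(\calD, \bt, P, P')$ and the symmetric multiplication by $x(P)$, and one would try to show both are quasi-isomorphisms up to an appropriate quantum grading shift for $k \ge 2$.

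The main obstacle in either approach is verifying that the candidate filtered chain map really does induce the Shumakovitch isomorphism on $E^2$, rather than a map with nontrivial kernel or cokernel. A careful combinatorial analysis is required at resolutions where $P$ and $P'$ lie on the same circle, in which case the singly and doubly reduced complexes coincide and multiplication by $x(P)$ or $x(P')$ is forced to vanish. An additional subtlety is that Shumakovitch's argument ultimately relies on multiplication-by-$x$ being a chain map commuting with the Khovanov differential; to extend his argument to $(C_\calD, \bd(\bt))$ one must verify (or correct for) the analogous commutation with the higher terms of $\bd(\bt)$ arising from the configurations of types $A$, $B$, $C$, $D$, and $E$, and this combinatorial check over all configuration types is the part of the argument we expect to be the hardest.
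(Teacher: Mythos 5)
This statement is left as a conjecture in the paper: the author offers no proof, only computational verification on links through $12$ crossings (together with the adjacent twin-arrows conjecture), so there is no argument in the paper to compare yours against. What you have written is likewise not a proof but a strategy outline, and the gap you yourself flag at the end is precisely the mathematical content that would be required. The mapping lemma only applies once you have an actual filtered chain map inducing an isomorphism on some page, and none of your candidates is shown to be a chain map for the full differential $\bd(\bt)$. Note in particular that in the paper's proof of Proposition~\ref{babytwinarrows} the multiplication map $X$ by $x(P)$ is only the zeroth-order piece $\bP^0(\bt)$ of the chain map $\bP(\bt)$: to commute with $\bd(\bt)$ it must be corrected by higher terms $P_\calC$ defined configuration by configuration, and verifying the chain map identity consumes two nontrivial lemmas (a decoration-change formula analogous to Theorem~5.4 of \cite{szaboss} and a case analysis over configurations of types $A$, $C$, $E$). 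Your proposed maps, multiplication by $x(P')$ between singly and doubly reduced complexes, would need an analogous tower of higher corrections and an analogous case analysis before the mapping lemma could even be invoked, and you have not constructed them.

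There is also a structural problem with your first candidate: the composition $C(\calD,\bt,P)\hookrightarrow C(\calD,\bt)\twoheadrightarrow C(\calD,\bt)/C(\calD,\bt,P')$ has no reason to induce the Shumakovitch isomorphism on $E^2$. In resolutions where $P$ and $P'$ lie on the same circle, the subcomplex for $P$ is contained in the subcomplex for $P'$, so the composition kills those generators outright; Shumakovitch's identification of reduced Khovanov homologies at different basepoints is an abstract consequence of $Kh(L;\FF_2)\cong\rdKh(L)\otimes V$, not a map of this form, so even at the level of $E^2$ you have not exhibited a filtered map realizing it. In short, both routes reduce the conjecture to exactly the kind of configuration-by-configuration construction and verification that the paper carried out only for the sub-versus-quotient comparison, and that work is missing here; as it stands the statement remains, as in the paper, a conjecture.
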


Thus we will write $\rdE(L)$ for the reduced spectral sequence of a
link $L$.

\begin{conjecture}[Twin Arrows]
  Let $L$ be a link.  For $k\ge 2$, the page $E^k(L)$ is isomorphic to
  two copies of $\rdE^k(L)$.  Specifically,
  \[ E^k(L) \cong \rdE^k(L)\{-1\} \oplus \rdE^k(L)\{1\}. \]
\end{conjecture}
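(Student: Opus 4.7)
The plan is to reduce the Twin Arrows conjecture to producing a filtered chain-level splitting of the short exact sequence
\[ 0 \to C(\calD, \bt, P) \to C(\calD, \bt) \to \qC(\calD, \bt, P) \to 0 \]
of the underlying filtered complexes. Combining such a splitting with Proposition~\ref{babytwinarrows}, which identifies $\qE^k \cong \rdE^k$ for $k \ge 2$, yields the asserted isomorphism $E^k(L) \cong \rdE^k(L)\{-1\}\oplus \rdE^k(L)\{+1\}$.

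Decompose $C_\calD = C^0 \oplus C^1$ as a graded vector space, where $C^1 = C(\calD, \bt, P)$ is the subcomplex generated by monomials divisible by $x(P)$ and $C^0$ is the span of monomials not divisible by $x(P)$. The filtration rule expresses the total differential in block form as
\[ \bd(\bt) = \begin{pmatrix} d_{00} & 0 \\ d_{10} & d_{11} \end{pmatrix}, \]
and $(C_\calD, \bd(\bt))$ splits into $(C^0, d_{00})\oplus (C^1, d_{11})$ as filtered chain complexes precisely when the off-diagonal term $d_{10}\colonns C^0 \to C^1$ is a coboundary in the internal Hom complex, i.e., when there exists $H\colonns C^0 \to C^1$ of appropriate bidegree with $d_{10} = d_{11} H + H d_{00}$; conjugation by $\bigl(\begin{smallmatrix} 1 & 0 \\ H & 1 \end{smallmatrix}\bigr)$ then eliminates $d_{10}$ without altering the diagonal blocks.

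To construct $H$, I would adapt Shumakovitch's approach to the analogous splitting for unreduced Khovanov homology over $\FF_2$. Consider the operator $X_P\colonns C_\calD \to C_\calD$ which at each resolution $I$ multiplies by the starting circle $x(P)_I$ through $P$; it satisfies $X_P^2 = 0$, and its restriction $X_0\colonns C^0 \to C^1$ is a vector-space isomorphism. The first technical step is to verify that $X_P$ is a chain map, which is equivalent to the intertwining $X_0 d_{00} = d_{11} X_0$ (a chain-level strengthening of Proposition~\ref{babytwinarrows}, asserting the two diagonal differentials agree under the $X_0$-identification). The second step, following Shumakovitch, is to construct a chain-level nullhomotopy $K$ of $X_P$ with $X_P = \bd(\bt) K + K \bd(\bt)$; one then reads the desired $H$ off the relevant block of $K$ using the already-established intertwining.

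The main obstacle will be producing the nullhomotopy $K$, a configuration-by-configuration analysis over the five families $A_k$, $B_k$, $C_{p,q}$, $D_{p,q}$, $E_{p,q}$ of configurations supporting nonzero summands of $d_\calC$. On the $1$-dimensional configurations the argument reduces to Shumakovitch's classical theorem, but the higher-dimensional families are genuinely new and may require exploiting the naturality, rotation, and the (unstated) conjugation and duality rules of~\cite{szaboss} simultaneously to assemble local homotopies into a global one of the correct filtered bidegree. A fallback would be a page-by-page induction: establish the Twin Arrows splitting on $E^2$ via Shumakovitch's theorem, then argue inductively that it is preserved by each round of the cancellation process of Algorithm~\ref{ssalg}, using compatibility of cancellations with the $X_P$-action to propagate the decomposition from $E^k$ to $E^{k+1}$.
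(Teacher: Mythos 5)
The statement you are trying to prove is stated in the paper only as a \emph{conjecture} (the Twin Arrows Conjecture), verified computationally; the paper does not prove it. What the paper actually proves ``towards'' it is Proposition~\ref{babytwinarrows}, namely that the sub- and quotient-complex reduced spectral sequences agree, and it does so by constructing a filtered chain map $\bP(\bt)$ whose leading term is $X$ but which requires nonzero correction terms $P_\calC$ on configurations of every dimension $k\ge 1$. This already shows that your first technical step is false as stated: $X_P$ alone is \emph{not} a chain map for the full Szab\'o differential $\bd(\bt)$ (it intertwines only the $\bd^0$ part), which is precisely why the paper must build the higher-order terms of $\bP(\bt)$ and prove Lemmas~\ref{deformlem} and \ref{decoratelem}. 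The identity $X_0 d_{00}=d_{11}X_0$ you want does not hold on the nose; at best one gets a filtered isomorphism $\qC\cong\rdC$ after adding filtration-increasing corrections, which is the content of Proposition~\ref{babytwinarrows} and does not split the extension.

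The genuine gap is the construction of the nullhomotopy $K$ (equivalently the splitting map $H$ with $d_{10}=d_{11}H+Hd_{00}$). You correctly identify this as the crux, but you do not construct it, and there is no reason offered that it exists: Shumakovitch's homotopy $\nu$ handles only the $1$-dimensional (Khovanov) part of the differential, and extending it compatibly over the families $A_k$, $B_k$, $C_{p,q}$, $D_{p,q}$, $E_{p,q}$ is exactly the open content of the conjecture. The fallback induction is also not sound as described: an abstract isomorphism of $E^k$-pages with a direct sum does not persist to $E^{k+1}$ unless it is induced by a map of filtered complexes (this is what the mapping lemma requires), and the cancellation process of Algorithm~\ref{ssalg} is not shown to preserve any $X_P$-module structure. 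So the proposal is a plausible strategy outline, consistent in spirit with the paper's partial result, but it does not constitute a proof, and the statement remains open in the paper itself.
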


In Appendix A, we give the Poincar\'e polynomials $\rdP(q, t)$ for the
reduced spectral sequence $\rdE^k$ for all knots with $11$ or fewer
crossings and all torus links $T_{p,q}$ with $(p-1)q \le 16$ for which
the spectral sequence has nontrivial higher differentials.

Recall from the introduction, $\qC(\calD, \bt, P)$ is the quotient
complex 
\[ C(\calD, \bt)/\rdC(\calD, \bt, P) \]
and $\qE^k(L, c)$ is the spectral sequence induced from $\qC(\calD,
\bt, P)$.  Towards the twin arrows conjecture, we prove our main
result, Proposition~\ref{babytwinarrows}.

\begin{proof}[Proof of Proposition~\ref{babytwinarrows}]
We will define a chain map
\[ \bP(\bt) : C(\calD, \bt)\to C(\calD, \bt) \]
that induces the desired chain homotopy equivalence.  The construction
of $\bP(\bt)$ and proof that it is a chain map closely mimics the the
construction of $\bd(\bt)$ and proof that it is a differential in
\cite{szaboss}.  We define a map
\[ P_\calC : V_0(\calC)\to V_1(\calC) \]
for each configuration $\calC$.  The map $P_\calC$ satisfies the
extension formula, the disconnected rule, the naturality rule and the
rotation rule.  The map $P_\calC$ satisfies an additional property
related to the point $P$.  If $a\in V_0(\calC)$ and $x(P)$ divides
$a$, then $P_\calC(a) = 0$.  If $P_\calC(a)\neq 0$ for some $a\in
V_0(\calC)$, then $y(P)$ divides $P_\calC(a)$.  Note, $P_\calC$ does
{\em not} satisfy the conjugation or duality rules.

Let $X : C_\calD\to C_\calD$ be given by the formula:
\[ X(a) = \begin{cases}
  x(P)a & \text{$x(P)$ does not divide $a$} \\
  0 & \text{otherwise,}
\end{cases} \]
where $a\in V(I)$ and $x(P)$ denotes the circle of $I$ which meets the
point $P$.  Set $\bP^0(\bt) = X$.

In what follows, we list only the non-zero terms of $P_\calC$.  Now we
define $P_\calC$ for $1$-dimensional configurations. We make the
following definitions:
\begin{definition}
  Let $\calC = (x, \gamma)$ be a $1$-dimensional configuration where
  $\gamma$ is a split arc.  Then $\calC$ has one active starting
  circle $x$ and two active ending circles.  Let $y_1$ denote the
  active ending circle which meets the tail of $\gamma^*$ and $y_2$
  the active ending circle which meets the head of $\gamma^*$.  If
  $y_2 = y(P)$, we define
  \[ P_{\calC_0}(1) = y_2. \]
\end{definition}
\begin{definition}
  Let $\calC = (x_1, x_2, \gamma)$ be a $1$-dimensional configuration
  where $\gamma$ is a join arc.  Then $\calC$ has two active starting
  circles and one active ending circle $y$.  Let $x_1$ denote the
  active starting circle that meets the tail of $\gamma$ and $x_2$ the
  active starting circle that meets the head of $\gamma$.  If $x_1 =
  x(P)$, we define
  \[ P_{\calC_0}(x_2) = y. \]
\end{definition}

\begin{figure}
  \begin{center}
    \includegraphics{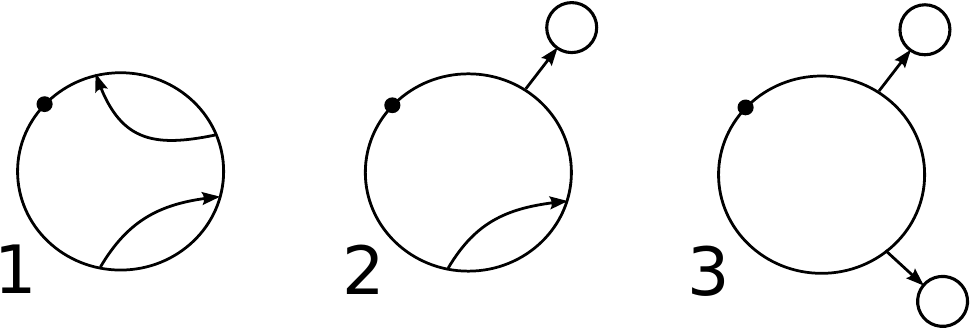}
  \end{center}
  \caption{The three $2$-dimensional configurations up to naturality
    and rotation for which $P_\calC\neq 0$.\label{p2dconfigs}}
\end{figure}

Now we define $P_\calC$ for $2$-dimensional configurations.  Up to
naturality and rotation, there are three $2$-dimensional
configurations for which $P_\calC\neq 0$.  They are given in
Figure~\ref{p2dconfigs}.  We make the following definitions:
\begin{definition}
  \begin{itemize}
  \item For a configuration of type 1, we define
    \[ P_{\calC_0}(1) = y(P). \]

  \item For a configuration of type 2, there are two starting circles.
    Let $x_1 = x(P)$ and $x_2$ be the other starting circle.  We
    define
    \[ P_{\calC_0}(x_1) = y(P). \]
    
  \item For a configuration of type 3, there are three starting
    circles.  Let $x_1 = x(P)$ and let $x_2$ and $x_3$ denote the
    other starting circles which meet a single arc.  We define
    \[ P_{\calC_0}(x_2x_3) = y(P). \]
  \end{itemize}
\end{definition}

Finally, for $k > 2$, we make the following definition:
\begin{definition}
  A $k$-dimensional configuration $\calC = (x_1, \dotsc, x_s,
  \gamma_1, \dotsc, \gamma_k)$ with $p + 1$ active starting circles
  and $q + 1$ active ending circles is said to be of type $P_{p,q}$ if,
  for each pair $(i, j)$ with $1\le i < j\le k$, the $2$-dimensional
  configuration $(x_1, \dotsc, x_s, \gamma_i, \gamma_j)$ is of type 1,
  2 or 3.  There is a unique starting circle $x_1 = x(P)$ called the
  central starting circle.  The other active starting circles $x_i$
  meet a single arc.  Similarly there is a unique ending circle $y_1 =
  y(P)$ that meets all the all the dual arcs $\gamma_i^*$.  In this
  case, we define
  \[ P_{\calC_0}(x_2x_3\dotsm x_{p+1}) = y_1 \]
  when $p\ge 1$ and
  \[ P_{C_0}(1) = y_1 \]
  when $p = 0$.
\end{definition}

We aim to show $\bP(\bt)$ is a chain map.

Recall the definition of the edge homotopy maps $H_m$ from
\cite{szaboss}.  The map $H_\calC$ which satisfies the extension
property.  The map $H_\calC$ is nonzero only for $1$-dimensional
configurations.  There are only two kinds of purely active
$1$-dimensional configurations: split and join.  For a $1$-dimensional
split configuration $\calC$, $H_\calC$ is given by
\[ H_\calC(1) = 1. \]
For a $1$-dimensional join configuration $\calC$ with starting circles
$x_1$ and $x_2$ and ending circle $y$, $H_\calC$ is given by
\[ H_\calC(x_1x_2) = y. \]
Note that $H_\calC$, like the Khovanov differential, does not depend
on the orientation of the arcs.  Finally, we define
\[ H_m = \sum_{(I, J)} H_{I, J} \]
the sum is taken over $1$-dimensional faces $(I, J)$ which differ only
$m^\text{th}$ coordinate and where
\[ H_{I,J} = H_{\calC(I, J)}. \]

\begin{lemma}\label{deformlem}
  Suppose $\bt$ and $\bt'$ are decorations of the diagram $\calD$ that
  differ only at the $m^\text{th}$ crossing.  Then $P(\bt)$ and
  $P(\bt')$ are related by the following formula:
  \[ P(\bt') = P(\bt) + H_m P(\bt) + P(\bt) H_m. \]
\end{lemma}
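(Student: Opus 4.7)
The plan is to prove Lemma \ref{deformlem} by the same face-by-face strategy Szab\'o uses for the analogous deformation lemma for the differential $\bd$ in \cite{szaboss}. Rewriting the desired identity over $\FF_2$ as
\[ \bP(\bt)+\bP(\bt') = H_m\,\bP(\bt)+\bP(\bt)\,H_m, \]
I would expand each side as a sum of contributions indexed by faces of the cube of resolutions $\calR$, and show equality face-by-face.

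The first step is the localization. Because $\bt$ and $\bt'$ agree except at the $m$-th crossing, for every face $(I,J)$ whose $m$-th coordinate does not change, the configurations $\calC(I,J,\bt)$ and $\calC(I,J,\bt')$ coincide, so those contributions cancel in $\bP(\bt)+\bP(\bt')$. Hence the left-hand side reduces to a sum over $k$-faces $(I,J)$ whose $m$-th coordinate does differ, with $k$ ranging over all dimensions. The right-hand side, by the definition of $H_m$ as a sum over $1$-faces changing only the $m$-th coordinate, is automatically supported on pairs of composable faces $(I,J')\circ(J',J)$ or $(I,I')\circ(I',J)$ in which exactly the $m$-th coordinate flip occurs in the factor $H_m$. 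Thus both sides are indexed by the same set of $k$-faces, and it suffices to verify the identity contribution by contribution.

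The second step is the case analysis for a fixed such $k$-face $(I,J)$. By the extension formula, naturality and disconnected rules satisfied by $P_\calC$ and $H_\calC$, I can pass to the active purely-connected component containing $\gamma_m$. I would then enumerate possibilities for the active part $\calC_0$: if $\calC_0$ is not of type $P_{p,q}$ (and no intermediate configuration obtained by collapsing $\gamma_m$ via $H_m$ is either), both sides vanish. If $\calC_0$ is of type $P_{p,q}$, the arc $\gamma_m$ either sits on a peripheral circle incident to $x(P)$ or $y(P)$, or is the distinguished arc adjacent to the central circle $x_1=x(P)$. In each configuration, the orientation of $\gamma_m$ controls whether $\gamma_m$ counts as a ``tail-at-$x(P)$'' or ``head-at-$x(P)$'' arc, which is precisely the data used in the definitions of $P_{\calC_0}$ on $1$- and $2$-dimensional configurations of types split/join/$1$/$2$/$3$ and their $P_{p,q}$ generalizations. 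Flipping this local datum changes $P_{\calC_0}$ in a controlled way that I would then match against the composite $H_m P_{\calC_0\setminus\gamma_m}$ or $P_{\calC_0\setminus\gamma_m} H_m$, depending on whether contracting $\gamma_m$ yields an active starting or ending circle on $x(P)$.

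The hard part will be the bookkeeping for the higher-dimensional types $P_{p,q}$, where many arcs are simultaneously present and one must verify the orientation flip at $\gamma_m$ induces exactly the right change among the $(p+1)$ active starting circles and $(q+1)$ active ending circles, and match this against how $H_m$ composes with a lower-dimensional $P$-configuration of type $P_{p',q'}$ obtained after the $H_m$-move. This is combinatorial but lengthy; I would organize it by first checking the $k=1$ cases (split and join) directly from the definitions, then the $k=2$ cases of types $1,2,3$, and finally reducing the general $P_{p,q}$ case to the $k=2$ case using the fact that every pair $(\gamma_i,\gamma_j)$ in a $P_{p,q}$ configuration forms a type $1,2,3$ sub-configuration, so the local orientation flip at $\gamma_m$ can be analyzed via its pairwise interactions with the other $\gamma_i$. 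Once the pairwise comparison is established, the identity for general $k$ follows by induction on $k$ in the same way Szab\'o handles the analogous $\bd$-deformation.
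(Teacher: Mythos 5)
Your first step coincides with the paper's: over $\FF_2$ both sides of the identity are supported on $k$-faces $(I,J)$ with $I(m)=0$ and $J(m)=1$, and the lemma reduces to the per-face equation
\[ P_{I,J,\bt} + P_{I,J,\bt'} \;=\; P_{I',J}H_{I,I'} + H_{J',J}P_{I,J'}, \]
where $I'$ (resp.\ $J'$) flips the $m^\text{th}$ coordinate of $I$ (resp.\ $J$). The gap is in your case analysis of this per-face identity. You treat two cases: (a) $\calC_0$ is not of type $P$ and no intermediate configuration is relevant, so everything vanishes; (b) $\calC_0$ is of type $P_{p,q}$, where you match the orientation flip at $\gamma_m$ against one of the composites. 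What is missing is the case that carries most of the weight in the paper's proof: the configuration $\calC(I,J)$ is of type $P$ for \emph{neither} decoration, so the left-hand side is zero, yet the intermediate faces $(I,J')$ or $(I',J)$ do produce nonzero composites, and one must prove $P_{I',J}H_{I,I'} = H_{J',J}P_{I,J'}$ so that the right-hand side cancels over $\FF_2$. This occurs, for instance, when the arc $\delta$ of the $m^\text{th}$ crossing joins a new circle $w$ to one of the non-central starting circles $x_i$, $i\ge 2$, or splits a circle off the part of the central circle lying in an ending circle $y_i$, $i\ge 2$: the total configuration is not of type $P$, but each composite can survive. The paper's argument for $k\ge 3$ is precisely an enumeration of the admissible positions of $\delta$ and of the dual arc $\delta^*$ (Figures~\ref{deformfig} and~\ref{deformfig2}) showing that whenever one composite is nonzero the other is too and they agree; nothing in your outline addresses this matching.

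Two smaller points. In your case (b) you should also record that the orientation constraints force at most one of $P_{I,J,\bt}$, $P_{I,J,\bt'}$ to be nonzero, and that exactly one composite survives according to whether $\delta$ is a join or a split arc ($P_{I,J,\bt} = H_{J',J}P_{I,J'}$ in the join case, $P_{I,J,\bt} = P_{I',J}H_{I,I'}$ in the split case); your ``controlled change'' phrasing does not yet pin this down. Finally, the closing appeal to ``induction on $k$'' is not how the argument goes (the paper's proof of the analogous statement is a direct case analysis for all $k\ge 3$), and as stated it is unclear what the inductive hypothesis would be, since being of type $P_{p,q}$ and the nonvanishing of the composites are conditions on the whole configuration, not ones that obviously propagate from $(k-1)$-dimensional subconfigurations.
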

\begin{proof}
  The proof is analogous to the proof of Theorem 5.4 in
  \cite{szaboss}.
  
  Let $\delta$ denote the (unoriented) arc corresponding to the
  $m^\text{th}$ crossings.  It is sufficient two show the following
  equation holds:
  \begin{equation}\label{deformeq}
    P_{I,J,\bt} - P_{I,J,\bt} = P_{I',J}H_{I,I'} + H_{J',J}P_{I,J'},
  \end{equation}
  for all $k$-faces $(I, J)$ with $I(m) = 0$ and $J(m) = 1$ where $I'$
  is obtained from $I$ by changing the $m^\text{th}$ coordinate to $1$
  and $J'$ is obtained from $J$ by changing the $m^\text{th}$
  coordinate to $0$ and the decoration is omitted from the notation on
  the right hand side since $\bt$ and $\bt'$ agree on $(I, J')$ and
  $(I', J)$.
  
  If $\undecC = \undecC(I, J)$ is disconnected, then the left hand
  side of \ref{deformeq} is zero.  Then either both $\calC(I,J')$ and
  $\calC(I',J)$ are disconnected or $\calC(I,I')$ and $\calC(I',J)$
  are disjoint and $\calC(I,J')$ and $\calC(J,J')$ are disjoint.  In
  both cases, the right hand side is zero; the latter follows from the
  extension property.
  
  We assume $\undecC(I,J)$ is connected.  The case when $\undecC$ is
  $1$- or $2$-dimensional is left as a straightforward exercise for
  the reader.  We now assume $k\ge 3$.
  
\begin{figure}
  \begin{center}
    \includegraphics{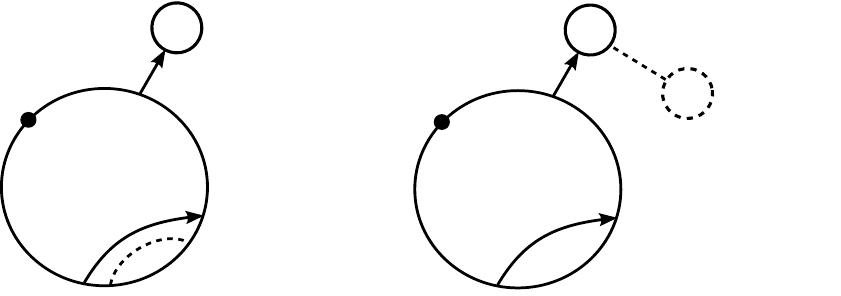}
  \end{center}
  \caption{Possible positions of the arc $\delta$ in proof of
    Lemma~\ref{deformlem} when $H_{J',J}P_{I,J'}\neq
    0$.\label{deformfig}}
\end{figure}
  
\begin{figure}
  \begin{center}
    \includegraphics{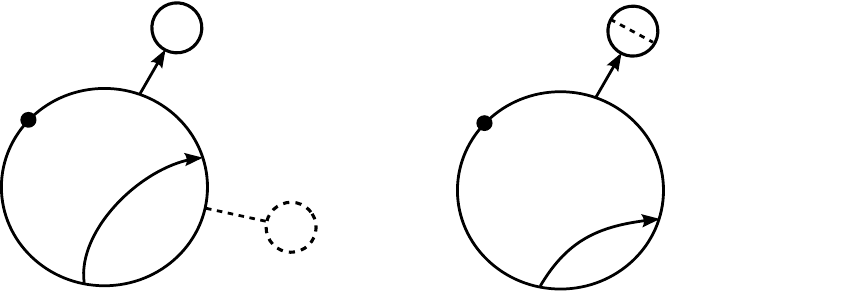}
  \end{center}
  \caption{Possible positions of the dual arc $\delta^*$ in proof of
    Lemma~\ref{deformlem} when $P_{I',J}H_{I,I'} \neq
    0$.\label{deformfig2}}
\end{figure}

  First, suppose $P_{I,J,\bt}$ or $P_{I,J,\bt'}$ is non-zero; we can
  assume it is $P_{I,J,\bt}$.  Then $P_{I,J,\bt'} = 0$.  If the arc
  $\delta$ is join, then $P_{I,J,\bt} = H_{J',J}P_{I,J'}$ and
  $P_{I',J}H_{I,I'} = 0$.  Alternatively, if $\delta$ is a split, then
  $P_{I,J,\bt} = P_{I',J}H_{I,I'}$ and $H_{J',J}P_{I,J'} = 0$.
  
  Now, suppose
  \begin{equation}\label{eqlhszero}
    P_{I,J,\bt} = P_{I,J,\bt'} = 0.
  \end{equation}
  In what follows, we need to show that
  \begin{equation}\label{eqrhszero}
    P_{I',J}H_{I,I'} = H_{J',J}P_{I,J'}
  \end{equation}
  
  First, suppose $H_{J',J}P_{I,J'}\neq 0$.  If $\delta$ is a split
  arc, it must split one of circles $y_2, \dotsc, y_m$ and
  \ref{eqrhszero} holds.  Alternatively, if $\delta$ is a join arc, it
  must connect a new circle $w$ to $y_1$.  The arc $\delta$ cannot
  meet $w$ and $x_1$, since that would contradict \ref{eqlhszero}.
  Therefore, $\delta$ meets $w$ and $x_i$ for some $2\le i\le p + 1$
  and again \ref{eqrhszero} holds.  See Figure~\ref{deformfig}.
  
  Alternatively, suppose $P_{I',J}H_{I,I'} \neq 0$.  If $\delta$ is a
  split arc, it must split a new circle $w$ off the central circle
  $x_1$.  The arc $\delta$ cannot split a circle off a portion of
  $x_1$ that lies in $y_1$, since that would contradict the fact
  \ref{eqlhszero}.  Therefore, $\delta$ splits a circle off a portion
  of $x_1$ that lies in $y_i$ for $i\le 2\le q+1$ and \ref{eqrhszero}
  holds.  If $\delta$ is a join, it must join a new circle $w$ to a
  circle $x_i$ for $2\le i\le p+1$ and again \ref{eqrhszero} holds.
  See Figure~\ref{deformfig2}.
\end{proof}

\begin{lemma}
  Let $\bt$ and $\bt'$ be two decorations of the diagram $\calD$.
  Then $P(\bt)$ is a chain map with respect to $\bd(\bt)$ if and only
  if $P(\bt')$ is a chain map with respect to $\bd(\bt')$.
\end{lemma}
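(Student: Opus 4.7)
The plan is to reduce to the case where $\bt$ and $\bt'$ differ at a single crossing, and then perform a direct algebraic computation using Lemma~\ref{deformlem} together with the analogous deformation formula for $\bd$ proved in \cite{szaboss}.

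Since any two decorations of $\calD$ are connected by a finite sequence of single-crossing orientation flips, and the statement is symmetric in $\bt$ and $\bt'$, I may assume $\bt$ and $\bt'$ differ only at the $m$-th crossing and prove a single direction. Working over $\FF_2$, the chain-map condition is equivalent to the vanishing of $\bP(\bt)\bd(\bt) + \bd(\bt)\bP(\bt)$, which I assume.

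I would then invoke Lemma~\ref{deformlem} to write
\[ \bP(\bt') = \bP(\bt) + H_m \bP(\bt) + \bP(\bt) H_m \]
together with the analogous deformation formula for $\bd$ from \cite{szaboss}, substitute both into $\bP(\bt')\bd(\bt') + \bd(\bt')\bP(\bt')$, and expand. The resulting terms can be organized by the number of $H_m$-factors. The $H_m$-free portion reduces to $\bP(\bt)\bd(\bt) + \bd(\bt)\bP(\bt) = 0$ by hypothesis; the single-$H_m$ terms collect into $H_m(\bP(\bt)\bd(\bt) + \bd(\bt)\bP(\bt)) + (\bP(\bt)\bd(\bt) + \bd(\bt)\bP(\bt)) H_m$, which also vanishes; and any term containing $H_m^2$ as a subword vanishes since $H_m$ modifies exactly the $m$-th coordinate of the cube of resolutions and cannot be composed with itself.

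The main obstacle is the remaining two-$H_m$ terms, which do not vanish termwise from the anticommutator hypothesis alone and must cancel in pairs. I expect their cancellation to parallel the compatibility identities between $\bd$ and $H_m$ used in \cite{szaboss} to establish $\bd(\bt)^2 = 0$, coming from a case-by-case analysis of the $2$-dimensional faces of the cube on which both $H_m$ and either $\bd(\bt)$ or $\bP(\bt)$ act nontrivially. A cleaner potential repackaging, worth attempting first, is to observe that $\Phi_m := 1 + H_m$ is an involution in $\FF_2$ (since $H_m^2 = 0$): if one can upgrade Lemma~\ref{deformlem} and its analogue for $\bd$ to conjugation identities $\bP(\bt') = \Phi_m \bP(\bt) \Phi_m$ and $\bd(\bt') = \Phi_m \bd(\bt) \Phi_m$ (by checking that the residual $H_m \bP(\bt) H_m$ and $H_m \bd(\bt) H_m$ terms vanish on $2$-faces), then the anticommutator transforms covariantly as $\Phi_m(\bP(\bt)\bd(\bt) + \bd(\bt)\bP(\bt))\Phi_m$, and the conclusion is immediate.
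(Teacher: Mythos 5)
Your overall strategy matches the paper's: reduce to decorations differing at a single crossing, invoke Lemma~\ref{deformlem} together with the deformation formula for $\bd$ (Theorem 5.4 of \cite{szaboss}), and expand the anticommutator over $\FF_2$; your handling of the $H_m$-free, single-$H_m$, and $H_m^2$ terms is correct. The genuine gap is at the step you flag as ``the main obstacle.'' The residual two-$H_m$ terms such as $H_m\bP(\bt)H_m\bd(\bt)$, $\bP(\bt)H_m\bd(\bt)H_m$, $H_m\bd(\bt)H_m\bP(\bt)$, $\bd(\bt)H_m\bP(\bt)H_m$ do not need to cancel in pairs by a case analysis modeled on $\bd(\bt)^2=0$: each one vanishes on its own. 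The missing fact, which is precisely the extra ingredient the paper cites, is that $H_m$ is zero on $V(I)$ whenever the $m$-th coordinate of $I$ is $1$. Since $H_m$ carries the $I(m)=0$ part of the cube to the $I(m)=1$ part, while every term of $\bP(\bt)$ and of $\bd(\bt)$ maps $V(I)$ to $V(J)$ with $I\le J$ and so never decreases any cube coordinate, any composition containing $H_m\circ F\circ H_m$ with $F\in\{\bP(\bt),\bd(\bt)\}$ is identically zero: after the inner $H_m$ everything lives in resolutions with $m$-th coordinate $1$, where the outer $H_m$ acts as zero.

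The same observation is exactly what your ``cleaner repackaging'' requires: once $H_m\bP(\bt)H_m=H_m\bd(\bt)H_m=0$ and $H_m^2=0$ are known, conjugation by $\Phi_m=1+H_m$ reproduces both deformation formulas, $\Phi_m$ is an involution, and the anticommutator transforms covariantly, giving the lemma; this is essentially equivalent to the paper's argument. But the needed vanishing is not a check ``on $2$-faces'' or any configuration-by-configuration verification --- it is the filtration-degree argument above, uniform over faces of all dimensions. As written, your primary plan hunts for pairwise cancellations that are not there to be paired, and your fallback leaves the decisive point contingent on a verification you have not correctly identified; supplying the one-line observation about $H_m$ and the cube coordinates closes the argument.
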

\begin{proof}
  It is enough to consider the case when $\bt$ and $\bt'$ differ at a
  single crossing, say the $m^\text{th}$.  The claim follows by
  Lemma~\ref{deformlem}, Theorem 5.4 in \cite{szaboss} and the fact
  that $H_m$ is zero on $V(I)$ when the $m^\text{th}$ coordinate of
  $I$ is $1$.
\end{proof}

\begin{lemma}\label{decoratelem}
  Let $(I, J)$ be an undecorated $k$-face.  There exists a decoration
  $\bt$ of the diagram $\calD$ such that
  \begin{equation}\label{eq1}
    \sum_K d_{K,J,\bt} P_{I,K,\bt} = \sum_K P_{K,J,\bt} d_{I,J,\bt},
  \end{equation}
  where then sums are taken over resolutions $K$ such that $I < K <
  J$.
\end{lemma}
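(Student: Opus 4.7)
The strategy is to view (\ref{eq1}) as the component of the chain-map relation $\bd(\bt)\bP(\bt) = \bP(\bt)\bd(\bt)$ supported on the single $k$-face $(I,J)$. Since every term on each side depends only on the restriction of $\bt$ to the $k$ arcs of $(I,J)$, the phrase ``choose a decoration $\bt$'' really means ``choose orientations for these $k$ arcs.'' I would reduce immediately to the purely active case via the extension formula, since both $d_\calC$ and $P_\calC$ satisfy this rule and passive circles of $\calC(I,J)$ remain passive at every intermediate $K$; and by the disconnected rule I may assume $\undecC(I,J)$ is connected.

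The plan is then a direct case analysis on the topological type of $\undecC(I,J)$, normalized via naturality and rotation. Because $d_\calC$ is nonzero only on configurations of type $A,B,C,D,E$ and $P_\calC$ is nonzero only on the $1$- and $2$-dimensional types listed in Section 4 and on the higher-dimensional family $P_{p,q}$, each product $d_{K,J,\bt}P_{I,K,\bt}$ or $P_{K,J,\bt}d_{I,K,\bt}$ can contribute only when $\undecC(I,J)$ admits a topological decomposition into factors of those types. This cuts the enumeration of both $\undecC(I,J)$ and the intermediate $K$'s to a short list.

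For each such type of $\undecC(I,J)$, I would pick a $\bt$ tailored to the configuration: one in which the arcs are oriented so that the surviving intermediate factors are of the simplest possible type (e.g.\ a $P_{p,q}$ followed by a purely active $A$, $B$, or $E$ configuration, rather than one of the orientation-sensitive types $C$ or $D$). With this choice, the terms on the two sides of (\ref{eq1}) can be matched by an explicit combinatorial bijection between admissible $K$, where the bijection is dictated by tracking which arc is inserted first or last and how the central circle $x(P)$ (distinguished by $P_\calC$) interacts with the arc being added.

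The main obstacle, as in Szab\'o's proof that $\bd(\bt)^2 = 0$, will be the bookkeeping in the higher-dimensional $P_{p,q}$ case with $p + q = k \ge 3$. There the distinguished role of the central circle in $P_\calC$ means that adding an arc $\gamma_j$ either preserves or breaks the $P_{p,q}$ pattern, and one must verify that each broken term is cancelled by a corresponding term in which the arcs are taken in the opposite order. I would model this directly on the parallel bookkeeping for the families $C_{p,q}, D_{p,q}, E_{p,q}$ in Sections 4--5 of \cite{szaboss}, reusing the classification of how a $2$-dimensional sub-face of $\undecC(I,J)$ sits inside a higher $P_{p,q}$ to reduce each higher-dimensional case to a combination of the already verified $2$-dimensional calculations (the three cases of Figure~\ref{p2dconfigs}).
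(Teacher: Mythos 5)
Your opening reductions (restrict to the active part via the extension formula, dispose of disconnected $\undecC(I,J)$ via the disconnected rule, note that only the orientations of the $k$ arcs of the face matter) coincide with the first steps of the paper's proof. But after that your plan diverges from what the paper does, and the divergence is where the actual content of the lemma lies. The paper does \emph{not} match terms of the two sides of \eqref{eq1} by a bijection for a tailored $\bt$; it uses the freedom in $\bt$ to make one side vanish outright. Concretely: since every nonzero value of $P_\calC$ requires a specific incidence of the arc orientations with $x(P)$ and $y(P)$, one can orient all arcs meeting $x(P)$ so that $P_{I,K,\bt}=0$ for \emph{every} intermediate $K$, killing the left-hand side identically. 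The remaining work is then to orient the arcs not meeting $x(P)$ (after first handling the special situation of a circle joined to $x(P)$ by two or more arcs, which forces a type-$E$ factor) so that the right-hand side also vanishes; this comes down to a short case analysis in which $\calC(I,K,\bt)$ has type $A$, $C$ or $E$, and in each residual case an explicit computation shows the composite $P_{K,J,\bt}d_{I,K,\bt}$ terms cancel. Your proposal contains no analogue of this vanishing mechanism, and without it the ``explicit combinatorial bijection between admissible $K$'' is not specified in any checkable way — you have deferred exactly the part of the argument that needs to be supplied.

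There is also a specific step in your plan that I do not believe survives scrutiny: you propose to orient the arcs so that the surviving intermediate factors are of type $A$, $B$ or $E$ ``rather than one of the orientation-sensitive types $C$ or $D$.'' The family to which a configuration can belong is largely dictated by its undecorated circle-and-arc pattern; changing the decoration can only toggle whether $d_\calC$ vanishes, not convert a $C$/$D$-shaped configuration into an $A$-, $B$- or $E$-type one. So the case analysis cannot be steered away from the $C$ (and $E$) families — indeed the paper's proof must treat the cases $\calC(I,K,\bt)$ of type $A$, $C$ and $E$ separately. Finally, note two smaller omissions: the lemma's sums run over \emph{all} intermediate $K$ including the two-component disconnected case, where equality (not vanishing) is what the extension formula gives you, and the case in which no circle of $\undecC(I,J)$ passes through $P$ needs its own (easy) argument; your sketch silently absorbs both into the general scheme.
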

\begin{proof}
  Set $\undecC = \undecC(I, J) = (x_1, \dotsc, x_t, \gamma_1, \dotsc,
  \gamma_k)$.  By the extension property it suffices to consider only
  active configurations $\undecC$.
  
  First, we consider the case when $\undecC$ is disconnected.  Let
  $\bt$ be arbitrary.  If $\undecC$ has more than 2 components, then
  at least one of $\calC(I, K, \bt)$ or $\calC(I, K, \bt)$ is
  disconnected.  By the disconnected rule, both sides of \eqref{eq1}
  vanish.  If $\undecC$ has 2 components, the only non-zero terms in
  \eqref{eq1} will be when $\calC(I, K, \bt)$ is one component and
  $\calC(K, J, \bt)$ is the other.  In this case, the claim holds by
  the extension property.
  
  If none of the circles of $\undecC$ go through the point $P$, then
  both sides of \eqref{eq1} necessarily vanish (after choice of some
  $\bt$).
  
  We assume $\undecC$ is connected and contains the point $P$.  By
  orienting the arcs that meet $x(P)$ appropriately, we can choose a
  decoration $\bt$ such that $P_{I,K,\bt} = 0$ for all $K$.  Then the
  left hand side of \eqref{eq1} vanishes.  Our goal is to choose the
  orientation of the remaining arcs appropriately to make the right
  hand side vanish as well.
  
  Let $x_1 = x(P)$ in $\calC = \calC(I,J,\bt)$.  Suppose there are two
  circles $x_i, i = 1, 2$ with the property that there are two (or
  more) arcs which meet $x_i$ and $x_1$.  Since $P_{\calC}$ vanishes
  on configurations where multiple arcs meeting the same pair of
  circles, the only non-zero terms on the right hand side can arise
  from a resolution $K$ such that $\calC_1 = \calC(I,K,\bt)$ has type
  $E$ and includes an arc meeting $x_1$ and $x_2$ and an arc meeting
  $x_1$ and $x_2$.  Set $\calC_2 = \calC(K,J,\bt)$.  The central
  circle of $\calC_1$ is marked, so $P_{K,J,\bt}$ vanishes unless $P$
  meets a circle $y_i$ for $2\le i\le q+1$.  However, the orientation
  of $\bt$ the right hand side is zero in this case.  Thus, we can
  assume there is at most one such circle $x_2$.  Consider the
  resolution $K$ where the arcs meeting $x_1$ and $x_2$ have
  $1$-resolution.  Let $x' = x(P)$ denote the circle meeting $P$ in
  $K$.  Orient the remaining arcs (that is, those that did not meet
  $x_1$) so that $P_{K,K',\bt} = 0$ for all $K < K'$.  We claim $\bt$
  is our desired decoration.
  
  Let $\gamma_i$ be an arc that meets $x(P)$.  Surgery along arcs that
  do not meet $x(P)$ cannot change the type of the $1$-configuration
  $(x_1, \dotsc, x_t, \gamma_i)$.  Consider the left hand summand for
  some resolution $K$.  Set $\calC_1 = \calC(I,K,\bt)$ and $\calC_2 =
  \calC(K,J,\bt)$.  If $x(P)$ is not among the active circles of
  $\calC_1$, then the summand vanishes.  It remains to consider $K$
  for which $\calC_1$ is one of the five configuration types.
  Moreover, $P_{K,J,\bt}$ vanishes on monomials divisible by $x(P)$,
  so we need only consider $K$ when $y(P)$ does not divide the image
  of $d_{I,K,\bt}$.  There are three cases: when $\calC_1$ has type
  $A, C$ or $E$.

\begin{figure}
  \begin{center}
    \includegraphics{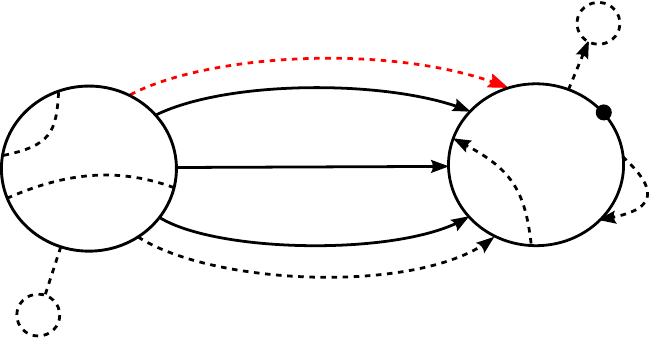}
  \end{center}
  \caption{The case when $\calC_1$ has type $A_k$ in proof of
    Lemma~\ref{decoratelem}.\label{casea}}
\end{figure}

\begin{figure}
  \begin{center}
    \includegraphics{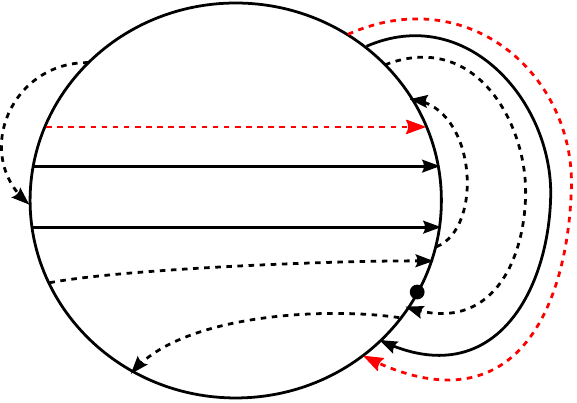}
  \end{center}
  \caption{The case when $\calC_1$ has type $C_{p,q}$ in proof of
    Lemma~\ref{decoratelem}.\label{casec}}
\end{figure}

  First, consider the case when $\calC_1$ has type $A_k$ as shown in
  Figure~\ref{casea}.  There are two active starting circles, $x_1 =
  x(P)$ which meets the head of each arc of $\calC_1$ and $x_2$ which
  meets the tail of each arc of $\calC_1$.  There are $k$ active
  ending circles.  Set $y_1 = y(P)$ and let $y_2, \dotsm, y_k$ denote
  the other active ending circles.  The orientation of arcs that do
  not meet $x_1$ have been chosen so that $P_{K,J,\bt} = 0$.
  
  Now consider the case there are no arcs of $\calC_2$ that fail to
  meet $x_1$.  Join arcs and arcs that meet a passive circle of
  $\calC_1$ already have orientation chosen so that $P_{K,J,\bt} = 0$.
  Suppose an arc meets one of the ending circles $y_i$ for $2\le i\le
  k$.  Then again we have $P_{K,J,\bt} = 0$ since the image of
  $d_{I,K,\bt}$ is not divisible by $y_i$.  That leaves one remaining
  case: $\calC_2$ has a single arc the head of which meets $x_1$ and
  the tail of which meets $x_2$.  Such an arc is shown in red in
  Figure~\ref{casea}.  Then $\calC(I,J,\bt)$ has type $A$ and explicit
  computation shows the right hand side is zero.
  
  Next, suppose $\calC_1$ has type $C_{p,q}$ as shown in
  Figure~\ref{casec}.  Again, case analysis again shows the that
  $P_{K,J,\bt} = 0$ except when there are (possibly) arcs of the form
  shown in red in the figure.  In either case, $\calC(I,J,\bt)$ has
  type $C$ and explicit computation shows the right hand side is zero.
  
  Finally, suppose $\calC_1$ has type $E_{p,q}$.  The analysis above
  showed the the point $P$ must be on an edge that meets a circle
  $y_i$ for $2\le i\le q+1$, there is only one split arc in $\calC_1$
  and there are no join arcs.  Again by case analysis, we see
  $P_{K,J,\bt} = 0$ except for a single edge of the type shown in red.
  The configuration $\calC(I,J,\bt)$ is $2$-dimensional and it is
  easily seen that the right hand side vanishes.
\end{proof}

We our now ready to complete the proof of
Proposition~\ref{babytwinarrows}.  The image of $\bP(\bt)$ lies in the
subcomplex $\rdC(\calD, \bt, P)$ and $\bP(\bt)$ vanishes on on
$\rdC(\calD, \bt, P)$.  Therefore, $P$ descends to a map $\qbP(\bt) :
\qC(\calD, \bt, P)\to \rdC(\calD, \bt, P)$.  The map $X$ induces an
isomorphism between $\qKh(L)$ and $\rdKh(L)$; see \cite{shumakovitch}.
Thus, $\qbP(\bt)$ induces an isomorphism between $\qE^2(L, c)$ and
$\rdE^2(L, c)$.  By the mapping lemma for spectral sequences
$\qbP(\bt)$ induces an isomorphism $\qE^k(L, c)\cong \rdE(L, c)$ for
$k\ge 2$.
\end{proof}

Let $\calD$ be a diagram with a distinguished crossing $c$.  Let
$\calD_0$ and $\calD_1$ be the diagrams obtained by replacing $c$ by
its $0$ or $1$ resolution, respectively.  Recall \cite{oszdouble} the
set of quasi-alternating links $\calQ$ is the smallest set of links
satisfying
\begin{itemize}
\item the unknot is in $\calQ$, and
\item if $L$ is a link which admits a diagram $\calD$ with a
  distinguished crossing $c$ such that $\calD_0, \calD_1\in \calQ$ and
  $\det(\calD_0), \det(\calD_1)\neq 0$ and $\det(L) = \det(\calD_0) +
  \det(\calD_1)$, then $L\in \calQ$.
\end{itemize}
All alternating links are quasi-alternating.  Ozsv\'ath and Szab\'o
\cite{oszdouble} showed that the Heegaard Floer variant of the
spectral sequence $E_{HF}^k$ collapses at the $E_{HF}^2$-page for
quasi-alternating links.  A link $L$ is called $\delta$-thin if its
reduced Khovanov homology is supported in a single $\delta$-grading.
Manolescu and Ozsv\'ath \cite{mos} showed that quasi-alternating knots
are $\delta$-thin.  Since the higher differentials $\bd^k(\bt)$
decrease the $\delta$-grading by $2$, the reduced spectral sequence
$\rdE^k$ necessarily collapses at the $\rdE^2$-page for $\delta$-thin
links.  Thus, we have shown the following proposition:
\begin{proposition}
  Let $L$ be a $\delta$-thin knot.  Then the reduced spectral sequence
  $\rdE^k(L)$ collapses at the $\rdE^2$-page.
\end{proposition}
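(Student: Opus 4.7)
The plan is to leverage the fact, recorded in Section 2, that each higher differential $\bd^k(\bt)$ has $\delta$-degree $-2$, and combine it with the hypothesis that $\rdKh(L)$ sits in a single $\delta$-grading. The essential point is that a differential which strictly shifts a grading cannot act nontrivially on a complex concentrated in a single value of that grading.

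First, I would note that the $\delta$-grading descends from $C_\calD$ to every page of the spectral sequence, and that $\rdE^2(L) \cong \rdKh(L)$. Since $L$ is $\delta$-thin by hypothesis, $\rdE^2(L)$ is supported in a single $\delta$-grading, say $\delta_0$.

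Next, I would recall that the grading rule, as stated in Section 2, forces $\bd^k(\bt)$ to have homological bidegree $(k, 2k-2)$, hence $\delta$-degree $q - 2h = (2k-2) - 2k = -2$. Consequently, every induced differential $d^k$ on $\rdE^k$ for $k \ge 2$ strictly decreases the $\delta$-grading by $2$. If $\rdE^k$ is supported in $\delta$-grading $\delta_0$, then the image of $d^k$ lies in $\delta$-grading $\delta_0 - 2$, but there are no generators there, forcing $d^k = 0$ and $\rdE^{k+1} \cong \rdE^k$.

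I would then conclude by induction on $k$: the base case $k = 2$ is the $\delta$-thinness hypothesis, and the inductive step is precisely the observation above that $d^k = 0$ implies $\rdE^{k+1}$ is still $\delta$-concentrated in grading $\delta_0$. Hence $\rdE^k(L) \cong \rdE^2(L)$ for all $k \ge 2$, which is the desired collapse. There is no serious obstacle here; the work is entirely in Section 2 (the grading rule) and in the invocation of Manolescu--Ozsv\'ath to guarantee that interesting examples such as quasi-alternating knots fall into the scope of the proposition.
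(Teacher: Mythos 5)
Your proposal is correct and follows essentially the same argument as the paper: the grading rule gives $\bd^k(\bt)$ bidegree $(k, 2k-2)$, hence $\delta$-degree $-2$, so on a reduced $E^2$-page supported in a single $\delta$-grading all higher differentials must vanish. The paper states this in exactly this way (without even spelling out the induction over pages), so there is nothing to add.
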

Combined with the twin arrows conjecture, this would imply the
spectral sequence $E^k(L)$ collapses at the $E^2$-page for
$\delta$-thin links.

Using the transverse invariant, Baldwin \cite{baldwin} computed
$E_{HF}^k$ for the torus knots $T(3,4)$ and $T(3,5)$ with a
well-defined quantum grading.  Our computations agree with his.  In
addition, he showed that $E_{HF}^k$ distributes over connect sum of
links.  We prove the following analogous result.
\begin{proposition}
  Let $L$ and $L'$ be links with distinguished components $c, c'$,
  respectively.  Let $L\# L'$ denote the connect sum between the
  distinguished components and let $c''$ denote the resulting
  component.  Then
  \[ \rdE^k(L\# L', c'') \cong \rdE^k(L, c)\otimes \rdE^k(L', c')\{1\} \]
  for $k\ge 2$.
\end{proposition}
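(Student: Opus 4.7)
The plan is to realize the reduced filtered chain complex of $\calD\#\calD'$ as the tensor product of the reduced filtered chain complexes of $\calD$ and $\calD'$ (with the stated grading shift) and then to apply the K\"unneth theorem for spectral sequences of filtered complexes over $\FF_2$, which is sharp over a field. In this way the claimed isomorphism reduces to a combinatorial statement about how Szab\'o's configurations interact with a connect sum decomposition.

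Fix planar diagrams $\calD$, $\calD'$ with basepoints $P, P'$ on the distinguished components away from crossings, and perform the connect sum in a small disk meeting $P$ and $P'$, producing a diagram $\calD\#\calD'$ with basepoint $P''$ on the resulting component $c''$ and decoration $\bt\cup\bt'$. For any pair of resolutions $(I, I')$, the circles of the resulting resolution of $\calD\#\calD'$ are canonically the circles of $I$ other than $x(P)$, together with the circles of $I'$ other than $x(P')$, together with a single new merged circle $x(P'')$. Requiring the $V$-factor at $x(P'')$ to equal the generator $x$, as is forced in the reduced complex at $P''$, gives a canonical isomorphism of chain groups
\[
\rdC(\calD\#\calD', \bt\cup\bt', P'') \cong \rdC(\calD, \bt, P)\otimes \rdC(\calD', \bt', P')\{1\},
\]
where the $\{1\}$ is routine grading bookkeeping, reflecting that one merged distinguished circle replaces two and so the quantum shift conventions of the reduced subcomplex collapse accordingly.

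Next I would verify that this identification intertwines the tensor product differential with $\bd(\bt\cup\bt')$. The crossings of $\calD\#\calD'$ split disjointly into crossings of $\calD$ and crossings of $\calD'$, and each $k$-face of the cube flips a set $S$ of crossings which I partition as pure $\calD$, pure $\calD'$, or mixed. By the extension formula, a pure $\calD$-face contributes its $\bd_\calD$-term tensored with the identity on the $\calD'$-factor (the circles inherited from $\calD'$ appear as passive circles of the resulting configuration), and symmetrically for pure $\calD'$-faces; these contributions assemble into $\bd_\calD\otimes \operatorname{id} + \operatorname{id}\otimes \bd_{\calD'}$. The main obstacle is showing every mixed face contributes zero. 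Since a $\calD$-arc has both endpoints on circles lying in the $\calD$ side of the connect sum disk, and similarly for $\calD'$-arcs, the unique circle that can touch arcs from both sides is the merged circle $m = x(P'')$. If $m$ is passive, or active but meets arcs from only one side, then the graph underlying $\calC_0$ is disconnected and $d_\calC = 0$ by the disconnected rule. The remaining case, in which $m$ bridges arcs from both sides, I would rule out by direct enumeration across the five admissible families $A_k, B_k, C_{p,q}, D_{p,q}, E_{p,q}$: each has a rigid planar shape in which no single circle can serve as the sole bridge between two otherwise-disjoint subconfigurations while keeping the family type intact, so such a $\calC$ is not of any admissible type and $d_\calC = 0$.

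Granting the mixed-face vanishing, the identification is an isomorphism of filtered chain complexes over $\FF_2$. The K\"unneth theorem for the associated spectral sequences then gives $\rdE^k(L\#L', c'') \cong \rdE^k(L, c)\otimes \rdE^k(L', c')\{1\}$ for all $k\ge 2$, as claimed. The main technical work is thus entirely concentrated in the case analysis for mixed faces with $m$ active on both sides; everything else is either direct identification or invocation of K\"unneth.
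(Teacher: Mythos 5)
Your reduction to the mixed-face analysis is the right frame (and matches the paper's strategy: identify the reduced complex of the connect sum with the tensor product and kill the cross terms), but the key step as you state it is false. It is not true that no admissible family admits a connect-sum decomposition along a single circle: a type $E_{p,q}$ configuration consists of a central circle together with $p$ outer circles each attached to it by a single arc and $q$ split arcs lying on it, and nothing prevents some of these arcs from lying on the $\calD$-side of the connect-sum neck and the rest on the $\calD'$-side, with the merged circle $m$ as the central circle. Such mixed faces genuinely occur and have $d_\calC\neq 0$ as maps on the unreduced complex, so your enumeration argument breaks down exactly at the case you flag as ``the main technical work,'' and the proof is incomplete as proposed.

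The missing ingredient --- and the point the paper's proof turns on --- is that in this situation the central circle of the type $E$ configuration is the marked circle $x(P'')$. The type $E$ map is nonzero only on monomials in which the central starting circle carries the label $1$, whereas every generator of the reduced subcomplex $\rdC(\calD'',\bt'',P'')$ is divisible by $x(P'')$; hence these terms restrict to zero on the reduced complex, even though they are nonzero on $C_{\calD''}$. This also explains why the statement is about the reduced theory: your argument, if it worked, would make no use of the basepoint in the mixed-face step. The remainder of your proposal (the chain-group identification with the $\{1\}$ shift, the extension formula for pure faces, the disconnectedness argument when $m$ is passive or meets arcs from only one side, and the field-coefficient K\"unneth step for the filtered tensor product) is sound and agrees with the paper.
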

\begin{proof}
  We show the underlying chain complexes are equal.  Choose decorated
  diagrams diagrams $(\calD, \bt)$ and $(\calD', \bt')$ of $L, L'$,
  respectively, such that a decorated diagram $(\calD'', \bt'')$ of
  $L\# L'$ is obtained from the disjoint union of $\calD$ and $\calD'$
  by surgery along an arc connecting the distinguished components.
  Choose points $P$ and $P'$ on the edges of $\calD, \calD'$,
  respectively, meeting the connect sum surgery arc.  Choose a point
  $P''$ of $\calD''$ on either edge resulting from the surgery; they
  will always belong to the same circle.  We have $C_{\calD}\otimes
  C_{\calD'}\{1\}$ is isomorphic to $C_{\calD''}$ as graded vector
  spaces, the grading shift coming from the fact the connect sum joins
  two circles which always carry the generator with grading $-1$.  The
  tensor product accounts for all faces which resolve only crossings
  of $\calD$ or $\calD'$.  We argue $\bd''(\bt)$ has no other terms.
  Inspection shows that only configurations of type $E$ can be
  expressed as a connect sum of two nontrivial configurations.
  However, such a decomposition will be along the central circle of
  the configuration which in this case is marked.  Thus, such
  configurations do not contribute to $\bd''(\bt)$.
\end{proof}

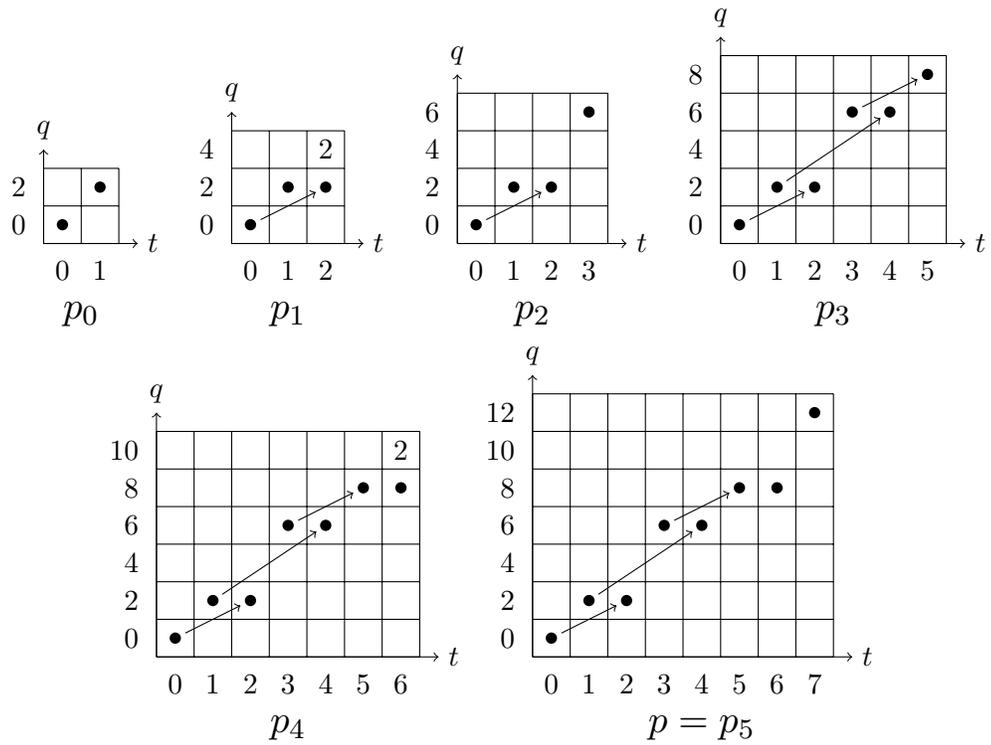
\begin{figure}
\begin{tikzpicture}[scale=.5]
  \begin{scope}
    \pgftransformshift{\pgfpointxy{0}{11}}
    \draw[->] (0,0) -- (2.5,0) node[right] {$t$};
    \draw[->] (0,0) -- (0,2.5) node[above] {$q$};
    \draw[step=1] (0,0) grid (2,2);
    \foreach \t in {0, 1}
      \draw (\t + 0.5,-.2) node[below] {$\t$};
    \foreach \t / \s in {0/0, 1/2}
      \draw (-.2, \t + 0.5) node[left] {$\s$};
    \draw(1,-1.2) node[below,scale=1.33] {$p_0$};
    \fill (0.5, 0.5) circle (.15);
    \fill (1.5, 1.5) circle (.15);
  \end{scope}
  \begin{scope}
    \pgftransformshift{\pgfpointxy{5}{11}}
    \draw[->] (0,0) -- (3.5,0) node[right] {$t$};
    \draw[->] (0,0) -- (0,3.5) node[above] {$q$};
    \draw[step=1] (0,0) grid (3,3);
    \foreach \t in {0, 1, 2}
      \draw (\t + 0.5,-.2) node[below] {$\t$};
    \foreach \t / \s in {0/0, 1/2, 2/4}
      \draw (-.2, \t + 0.5) node[left] {$\s$};
    \draw(1.5,-1.2) node[below,scale=1.33] {$p_1$};
    \fill (0.5, 0.5) circle (.15);
    \fill (1.5, 1.5) circle (.15);
    \fill (2.5, 1.5) circle (.15);
    \fill (2.5, 2.5) node {$2$};
    \draw[->] (0.768, 0.634) -- (2.232, 1.366);
  \end{scope}
  \begin{scope}
    \pgftransformshift{\pgfpointxy{11}{11}}
    \draw[->] (0,0) -- (4.5,0) node[right] {$t$};
    \draw[->] (0,0) -- (0,4.5) node[above] {$q$};
    \draw[step=1] (0,0) grid (4,4);
    \foreach \t in {0, 1, 2, 3}
      \draw (\t + 0.5,-.2) node[below] {$\t$};
    \foreach \t / \s in {0/0, 1/2, 2/4, 3/6}
      \draw (-.2, \t + 0.5) node[left] {$\s$};
    \draw(2,-1.2) node[below,scale=1.33] {$p_2$};
    \fill (0.5, 0.5) circle (.15);
    \fill (1.5, 1.5) circle (.15);
    \fill (2.5, 1.5) circle (.15);
    \fill (3.5, 3.5) circle (.15);
    \draw[->] (0.768, 0.634) -- (2.232, 1.366);
  \end{scope}
  \begin{scope}
    \pgftransformshift{\pgfpointxy{18}{11}}
    \draw[->] (0,0) -- (6.5,0) node[right] {$t$};
    \draw[->] (0,0) -- (0,5.5) node[above] {$q$};
    \draw[step=1] (0,0) grid (6,5);
    \foreach \t in {0, 1, 2, 3, 4, 5}
      \draw (\t + 0.5,-.2) node[below] {$\t$};
    \foreach \t / \s in {0/0, 1/2, 2/4, 3/6, 4/8}
      \draw (-.2, \t + 0.5) node[left] {$\s$};
    \draw(3,-1.2) node[below,scale=1.33] {$p_3$};
    \fill (0.5, 0.5) circle (.15);
    \fill (1.5, 1.5) circle (.15);
    \fill (2.5, 1.5) circle (.15);
    \fill (3.5, 3.5) circle (.15);
    \fill (4.5, 3.5) circle (.15);
    \fill (5.5, 4.5) circle (.15);
    \draw[->] (0.768, 0.634) -- (2.232, 1.366);
    \draw[->] (1.750, 1.666) -- (4.250, 3.334);
    \draw[->] (3.768, 3.634) -- (5.232, 4.366);
  \end{scope}
  \begin{scope}
    \pgftransformshift{\pgfpointxy{3}{0}}
    \draw[->] (0,0) -- (7.5,0) node[right] {$t$};
    \draw[->] (0,0) -- (0,6.5) node[above] {$q$};
    \draw[step=1] (0,0) grid (7,6);
    \foreach \t in {0, 1, 2, 3, 4, 5, 6}
      \draw (\t + 0.5,-.2) node[below] {$\t$};
    \foreach \t / \s in {0/0, 1/2, 2/4, 3/6, 4/8, 5/10}
      \draw (-.2, \t + 0.5) node[left] {$\s$};
    \draw(3.5,-1.2) node[below,scale=1.33] {$p_4$};
    \fill (0.5, 0.5) circle (.15);
    \fill (1.5, 1.5) circle (.15);
    \fill (2.5, 1.5) circle (.15);
    \fill (3.5, 3.5) circle (.15);
    \fill (4.5, 3.5) circle (.15);
    \fill (5.5, 4.5) circle (.15);
    \fill (6.5, 4.5) circle (.15);
    \fill (6.5, 5.5) node {$2$};
    \draw[->] (0.768, 0.634) -- (2.232, 1.366);
    \draw[->] (1.750, 1.666) -- (4.250, 3.334);
    \draw[->] (3.768, 3.634) -- (5.232, 4.366);
  \end{scope}
  \begin{scope}
    \pgftransformshift{\pgfpointxy{13}{0}}
    \draw[->] (0,0) -- (8.5,0) node[right] {$t$};
    \draw[->] (0,0) -- (0,7.5) node[above] {$q$};
    \draw[step=1] (0,0) grid (8,7);
    \foreach \t in {0, 1, 2, 3, 4, 5, 6, 7}
      \draw (\t + 0.5,-.2) node[below] {$\t$};
    \foreach \t / \s in {0/0, 1/2, 2/4, 3/6, 4/8, 5/10, 6/12}
      \draw (-.2, \t + 0.5) node[left] {$\s$};
    \draw(4.5,-1.2) node[below,scale=1.33] {$p = p_5$};
    \fill (0.5, 0.5) circle (.15);
    \fill (1.5, 1.5) circle (.15);
    \fill (2.5, 1.5) circle (.15);
    \fill (3.5, 3.5) circle (.15);
    \fill (4.5, 3.5) circle (.15);
    \fill (5.5, 4.5) circle (.15);
    \fill (6.5, 4.5) circle (.15);
    \fill (7.5, 6.5) circle (.15);
    \draw[->] (0.768, 0.634) -- (2.232, 1.366);
    \draw[->] (1.750, 1.666) -- (4.250, 3.334);
    \draw[->] (3.768, 3.634) -- (5.232, 4.366);
  \end{scope}
\end{tikzpicture}
  \caption{Diagrammatic representation of the Poincar\'e polynomials
    $p^k_\ell(t, q)$. \label{pfig}}
\end{figure}

Bloom \cite{bloomss} conjectured the structure of the monopole Floer
homology variant of the spectral sequence for torus knots $T(3, 6n\pm
1)$.  Our computations agree with his conjecture for $T(3,5)$ and
$T(3,7)$.  Based on our computations for torus links $T(3,n)$ with
$n\le 9$, we extend his conjecture to all $3$-strand torus links as
follows.
\begin{conjecture}
  Set
  \begin{align*}
    f_j(t, q) &= \sum_{i=0}^{j-1} t^{8i}q^{12i} \\
    p^2_0 (t, q) = p^3_0(t, q) = p^4_0(t, q) &= 1 + tq^2 \\
    p^2_1(t, q) &= 1 + tq^2 + t^2q^2 + 2t^2q^4 \\
    p^3_1(t, q) = p^4_1(t, q) &= tq^2 + 2t^2q^4 \\
    p^2_2(t, q) &= 1 + tq^2 + t^2q^2 + t^3q^6 \\
    p^3_2(t, q) = p^4_2(t, q) &= tq^2 + t^3q^6 \\
    p^2_3(t, q) &= 1 + tq^2 + t^2q^2 + t^3q^6 + t^4q^6 + t^5q^8 \\
    p^3_3(t, q) &= tq^2 + t^4q^6 \\
    p^4_3(t, q) &= 0 \\
    p^2_4(t, q) &= 1 + tq^2 + t^2q^2 + t^3q^6 + t^4q^6 + t^5q^8 + t^6q^8 + 2t^6q^{10} \\
    p^3_4(t, q) &= tq^2 + t^4q^6 + t^6q^8 + 2t^6q^{10} \\
    p^4_4(t, q) &=  t^6q^8 + 2t^6q^{10} \\
    p^2(t, q) = p^2_5(t, q) &= 1 + tq^2 + t^2q^2 + t^3q^6 + t^4q^6 + t^5q^8 + t^6q^8 + t^7q^{12} \\
    p^3(t, q) = p^3_5(t, q) &= tq^2 + t^4q^6 + t^6q^8 + t^7q^{12} \\
    p^4(t, q) = p^4_5(t, q) &= t^6q^8 + t^7q^{12}. \\
  \end{align*}
  For all $n > 1$, the spectral sequence for the torus link $T(3, n)$
  collapses at the $E^4$-page.  Moreover, writing $n = 2 + 6j + \ell$
  with $j\ge 0$ and $0\le \ell < 6$, the Poincar\'e polynomial for
  $\rdE^k(T(3, n)), k = 2, 3, 4$ is given by
  \[ \rdP^k_{3,n}(t, q) = q^{2j - 3}(1 + t^2q^4(f_j(t, q)p^k(q, t) + t^{7j}q^{12j}p^k_\ell(q, t))). \]
\end{conjecture}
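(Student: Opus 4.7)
The plan is to proceed by induction on $n$, using the standard $3$-braid presentation $(\sigma_1\sigma_2)^n$ for $T(3, n)$. First I would verify the formulas directly for the base cases $n = 2, \dotsc, 7$, which covers one full period of $\ell$; these are finite and should already be part of the author's verified computational data. The induction would then pass from $T(3, n)$ to $T(3, n+6)$ by resolving the six added crossings one at a time, using Khovanov-style skein exact triangles lifted to Szab\'o's filtered complex.

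For the inductive step, I would analyze how the filtered chain complex $C(\calD_n, \bt)$ for $T(3, n)$ decomposes relative to the complex $C(\calD_{n-1}, \bt')$ for $T(3, n-1)$ after resolving a single added crossing. The $0$- and $1$-resolutions give, after isotopy, $T(3, n-1)$ and a simpler braid closure; the mapping cone structure of $C(\calD_n, \bt)$ together with the mapping lemma for spectral sequences should relate $\rdE^k(T(3, n))$ to a cone between the two simpler pages. The explicit building-block form $f_j(t, q) = \sum_{i=0}^{j-1} t^{8i}q^{12i}$ strongly suggests that each additional full twist contributes one new block $t^{8j}q^{12j}p^k(q, t)$ in a direct-sum-like fashion, and the goal is to extract this block-by-block structure from the mapping cone using the $p^k_\ell$ correction terms to handle the transient behavior until the pattern stabilizes at $j = 1$.

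The central step is identifying $d^2$ and $d^3$ explicitly on the $E^2$-page. Since $\bd^k(\bt)$ has bidegree $(k, 2k-2)$ and so decreases the $\delta$-grading by $2$, and $\rdKh(T(3, n))$ is supported on only a few $\delta$-lines by Sto\v{s}i\'c's computation of the Khovanov homology of $3$-strand torus links, the possible nonzero higher differentials are highly constrained. To show each predicted $d^2$- and $d^3$-arrow in the diagrams $p^2, p^3$ of Figure~\ref{pfig} is actually present, I would locate a specific $2$- or $3$-face of the cube of resolutions whose configuration type ($A, B, C, D$, or $E$) contributes the required term, and use naturality and the extension formula to rule out cancellation with the other contributing faces; the predicted dependence only on $\ell \bmod 6$ should make this a check at the level of a single full twist.

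The main obstacle is proving collapse at $E^4$, i.e.\ $d^k = 0$ for all $k \ge 4$. The grading rule alone does not force this once several $\delta$-lines of surviving generators remain, so additional input is needed: either a direct enumeration of the higher-dimensional configurations whose source and target both survive to $E^k$, combined with a parity or decoration-averaging argument showing their contributions vanish, or, more indirectly, an appeal via Conjecture~\ref{conj1} to the Heegaard Floer spectral sequence, where Bloom's conjectured structure for $T(3, 6n\pm 1)$ and Baldwin's transverse-invariant computation for $T(3, 4)$ and $T(3, 5)$ provide anchor cases. Without one of these additional inputs the conjecture likely must remain conjectural beyond the finite range already directly verified.
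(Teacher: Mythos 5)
The first thing to say is that the paper does not prove this statement: it is presented as a conjecture, supported only by direct machine computation of $\rdE^k(T(3,n))$ for $n\le 9$, extending Bloom's conjectured pattern for the monopole Floer analogue of the spectral sequence. So there is no argument in the paper to compare yours against, and your own closing admission is accurate --- what you have written is a research program, not a proof.

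Within that program there are two concrete gaps. First, the inductive step via a skein triangle does not go through as described. Resolving one crossing of $(\sigma_1\sigma_2)^n$ does present $C(\calD,\bt)$ as a mapping cone of filtered complexes, but the connecting map of that cone is not just the $E^0$-level Khovanov edge map: it has components in every filtration degree, coming from all faces of the cube that straddle the distinguished crossing. There is no mapping-cone long exact sequence for the pages $E^k$, $k\ge 2$, of a filtered complex, and the mapping lemma only transports isomorphisms along filtered chain maps; so knowing $\rdE^k$ of the two resolutions does not determine $\rdE^k(T(3,n))$ without explicit control of these cross terms, which is exactly the hard part. Relatedly, exhibiting a single $2$- or $3$-dimensional configuration of type $A$--$E$ contributing to a predicted arrow in Figure~\ref{pfig} does not show that $d^2$ or $d^3$ is nonzero on the corresponding page: the page differentials are computed only after cancelling all lower-order terms, and each application of the Cancellation Lemma adds zig-zag composites to the higher-order terms, so all such composites must be accounted for, not just the direct face contributions. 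Second, as you concede, you have no argument for collapse at $E^4$; the appeal to Conjecture~\ref{conj1} is circular within this paper (it is itself only conjectural), and the proposed ``parity or decoration-averaging argument'' is unspecified. The only parts of your outline that are actually established are the base cases, which coincide with the computations already reported in the paper.
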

Diagrammatic representations of the Poincar\'e polynomials $p^k_\ell(t,
q)$ are given in Figure~\ref{pfig}.

Bloom \cite{bloomodd} showed that odd Khovanov homology is mutation
invariant.  In addition, his argument showed that both the Heegaard
Floer and monopole Floer variants of the spectral sequence are
mutation invariant.  Using the mutant knot tables compiled by Stoimeno
\cite{stoimenow}, we verified that $E^k$ is invariant under mutation
for all (5300) mutant knot groups through $14$ crossings.  Thus we
make the following conjecture.
\begin{conjecture}
  Let $K$ be a knot.  For $k\ge 2$, $E^k(K)$ is invariant under
  mutation.
\end{conjecture}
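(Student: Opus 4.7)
The plan is to model the argument on Bloom's proof of mutation invariance for odd Khovanov and for the Heegaard Floer and monopole spectral sequences~\cite{bloomodd}. Any mutation is realized by a Conway mutation along a sphere $S$ meeting $\calD$ in four points and bounding a 2-tangle $T$, via one of three $180^\circ$ rotations $\sigma$. I would first choose diagrams $\calD, \calD'$ for $K, K^\sigma$ that agree outside $S$ and are related by $\sigma$ inside, together with corresponding decorations $\bt, \bt'$, and aim for a filtered chain homotopy equivalence $C(\calD, \bt)\simeq C(\calD', \bt')$. By the mapping lemma for spectral sequences, such an equivalence would produce the desired isomorphism of $E^k$ for $k\ge 2$.

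The first technical step is a tangle decomposition of the cube of resolutions. Every resolution $I$ of $\calD$ splits as $(I_T, I_{T^c})$, and the collection of circles of $I$ is built by gluing interior and exterior tangle resolutions along the four points of $S\cap \calD$. Over $\FF_2$ this yields a candidate bijection on generators between $C(\calD, \bt)$ and $C(\calD', \bt')$ intertwined by $\sigma$, exactly as in the standard proofs of Khovanov mutation invariance with $\FF_2$ coefficients. The next step is to check that $\bd(\bt)$ is carried to $\bd(\bt')$ under this bijection. By the naturality, rotation, and disconnected rules, all configurations $\calC(I, J, \bt)$ contained entirely in the interior or exterior of $S$ match tautologically, so the content is entirely in configurations whose circles or arcs cross $S$. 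I would decompose $d_\calC$ as a sum over gluing patterns on $S$ and attempt to show each piece is $\sigma$-equivariant by a finite case analysis of the five configuration families $A_k$, $B_k$, $C_{p,q}$, $D_{p,q}$, $E_{p,q}$.

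The hard part will be configurations of type $E_{p,q}$, and partially $C_{p,q}$, $D_{p,q}$, whose central circle crosses $S$: unlike the odd Khovanov differential, whose locality across a Conway sphere is essentially the content of Bloom's argument, the $E$-family differential depends globally on the planar embedding of the central circle and is not manifestly invariant under rearranging the tangle. I expect that strict equivariance at the chain level will fail on these configurations and will need to be corrected by an explicit chain homotopy built from maps on lower-dimensional configurations, in the spirit of the maps $P_\calC$ constructed in the proof of Proposition~\ref{babytwinarrows}. An alternative, weaker route is to combine Khovanov mutation invariance at the $E^2$-page (known with $\FF_2$ coefficients) with a page-by-page application of the mapping lemma, inductively lifting the $E^k$-level isomorphism to a quasi-isomorphism of the truncated complexes on each page. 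In either approach, producing the required higher chain homotopies that absorb the failure of strict $\sigma$-equivariance on the $E$, $C$, $D$ families is the essential missing input, which is presumably why the statement remains conjectural despite the extensive computational evidence on all $5300$ mutant knot groups through $14$ crossings.
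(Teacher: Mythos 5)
The paper does not prove this statement: it appears only as a conjecture, supported by the computation that $E^k$ is unchanged on all $5300$ mutant knot groups through $14$ crossings. So there is no proof in the paper to compare yours against, and your proposal --- as you concede in its closing sentences --- is a research plan rather than a proof. The decisive gap is the one you name yourself: the configurations of type $E_{p,q}$ (and $C_{p,q}$, $D_{p,q}$) depend on the global planar position of the central circle, strict $\sigma$-equivariance of $\bd(\bt)$ fails for them, and you do not construct the chain homotopies that would absorb that failure. Until those maps are exhibited and shown to satisfy the required identities --- an analysis comparable in length and delicacy to the construction of $\bd(\bt)$ itself, or of the maps $P_\calC$ in the proof of Proposition~\ref{babytwinarrows} --- nothing has been established. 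Be aware also that even the ``tautological'' matching of purely interior or exterior configurations is less automatic than you suggest: the circles crossing the Conway sphere are reglued by the mutation, so the identification of generators is already nontrivial, which is why mutation invariance of $Kh(\cdot;\FF_2)$ itself (Bloom \cite{bloomodd}, Wehrli) is not a one-line bijection argument.

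Your proposed fallback route is not viable as stated. The mapping lemma upgrades a \emph{morphism} of spectral sequences that is an isomorphism on some page to an isomorphism on all subsequent pages; it needs an actual filtered chain map $C(\calD,\bt)\to C(\calD',\bt')$ (or at least a map of spectral sequences commuting with every $d^k$) inducing the $E^2$-isomorphism. Mutation invariance of Khovanov homology over $\FF_2$ supplies only an abstract isomorphism of the $E^2$ groups, with no chain-level map realizing it, and an abstract isomorphism of one page carries no information about the higher differentials --- there are standard examples of filtered complexes with isomorphic $E^2$ pages and different $E^3$ pages. So there is no page-by-page induction to run; the missing object is exactly the same filtered chain map (up to homotopy) whose construction is the open problem in your first approach. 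In short, the statement remains a conjecture, and your write-up correctly identifies, but does not fill, the hole.
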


Recall, $E'(L)$ denotes the spectral sequence arising from the
alternate differential defined using mirror configuration types.  We
make the following conjecture:
\begin{conjecture}
  Let $L$ be a knot or link.  For $k\ge 2$, we have $E^k(L) \cong
  E'^k(L)$.
\end{conjecture}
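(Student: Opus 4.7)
The plan is to construct a filtration-preserving chain homotopy equivalence
\[ \bP(\bt)\colon (C_\calD, \bd(\bt)) \longrightarrow (C_\calD, \bd'(\bt)) \]
whose associated graded map is the identity on $E^0$, and then invoke the mapping lemma for spectral sequences to conclude that the induced map on $E^k$ is an isomorphism for every $k\ge 1$.  The construction proceeds by exactly the same scheme as the proof of Proposition~\ref{babytwinarrows}: define a configuration-by-configuration family of maps $P_\calC\colon V_0(\calC)\to V_1(\calC)$, assemble them into a total map $\bP(\bt)$ on $C_\calD$, and verify the homotopy equation $\bd'(\bt)\circ\bP(\bt) = \bP(\bt)\circ\bd(\bt)$ cube face by cube face.

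First, I would require $P_\calC$ to satisfy the extension formula, the disconnected rule, the naturality rule, the rotation rule, and a grading rule of degree $(0,0)$.  Since the differentials $\bd(\bt)$ and $\bd'(\bt)$ agree on configurations of type $A$, $B$ and $E$, the identity map furnishes the zero-dimensional part $\bP^0(\bt) = \mathrm{id}$, and the higher-dimensional contributions to $P_\calC$ must be supported only on those configurations that detect the mirror flip on $C$- and $D$-type pieces.  The main difficulty at this stage is locating the correct generating families of configurations for $P_\calC$: because the defect $\bd'(\bt) - \bd(\bt)$ only involves the reorientation of the ambient sphere on $C$ and $D$ configurations, the natural candidates are certain degenerate or boundary configurations that mix with $C$, $D$ and $E$ pieces when composed with $d_\calC$.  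Identifying these candidates is the analogue of finding the three families in Figure~\ref{p2dconfigs} and their higher-dimensional relatives of type $P_{p,q}$.

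Second, I would prove a deformation lemma analogous to Lemma~\ref{deformlem}: if $\bt$ and $\bt'$ differ at one crossing, then
\[ \bP(\bt') = \bP(\bt) + H_m\,\bP(\bt) + \bP(\bt)\,H_m, \]
so that the chain-map property for $\bP(\bt)$ is independent of decoration; this reduces the full verification to a single convenient choice.  Then, mirroring Lemma~\ref{decoratelem}, I would check the identity $\sum_K d'_{K,J,\bt} P_{I,K,\bt} = \sum_K P_{K,J,\bt} d_{I,K,\bt}$ on each undecorated face $\undecC(I,J)$ by case analysis, exploiting the disconnected rule on disconnected faces and handling connected faces by enumerating the possible positions where an added arc converts a $C$- or $D$-type configuration into its mirror.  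The grading rule ensures $\bP(\bt)$ preserves both the homological filtration and the quantum grading, so the associated graded map is a chain map on $(C_\calD, \bd^0)$.

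The main obstacle is the combinatorial bookkeeping in the middle step.  Unlike Proposition~\ref{babytwinarrows}, where the reduced-subcomplex point $P$ singles out one circle and cuts the case analysis down to three two-dimensional families plus their $P_{p,q}$ extensions, here there is no preferred marked point, and the relevant configurations must instead be distinguished by the chirality asymmetries of types $C$ and $D$.  Consequently the candidate families for $P_\calC$ are likely to be more numerous and to interact nontrivially among themselves; systematically describing them and verifying the cocycle equation on arbitrary-dimensional faces involving $C$, $D$, $E$ pieces is where the bulk of the work lies.  A promising shortcut, which I would explore in parallel, is to bring in the conjugation and duality rules that $d_\calC$ satisfies (mentioned but unused in \cite{szaboss}): combining the duality rule with the mirror operation may produce the desired filtered homotopy equivalence directly, packaging the case analysis into a single symmetry argument and circumventing the explicit construction of the families defining $\bP(\bt)$.
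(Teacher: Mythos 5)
There is a fundamental mismatch here: the statement you are addressing is stated in the paper as a \emph{conjecture}, supported only by computations (the author verified it on all prime links with at most $12$ crossings, prime knots with at most $14$ crossings, and small torus knots), and the paper offers no proof of it. Your text is likewise not a proof: it is a strategy outline in which the decisive step --- identifying the families of configurations on which $P_\calC$ is nonzero and verifying the chain-map identity $\bd'(\bt)\circ\bP(\bt) = \bP(\bt)\circ\bd(\bt)$ face by face --- is explicitly deferred. You acknowledge this yourself (``identifying these candidates is the analogue of finding the three families \dots'', ``systematically describing them \dots is where the bulk of the work lies''), but that is precisely the mathematical content that would be needed; without it nothing has been established. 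Note also that the existence of a filtered chain map which is the identity on $E^0$ is not something one can posit and then merely ``verify'': it is equivalent to exhibiting a filtered null-homotopy of the defect $\bd'(\bt)-\bd(\bt)$, and if no such map exists the pages $E^k$ and $E'^k$ could genuinely differ --- which is exactly why the paper leaves this as a conjecture rather than a proposition.

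Two further cautions on the details you do commit to. First, setting $\bP^0(\bt)=\mathrm{id}$ is natural, but in Proposition~\ref{babytwinarrows} the degree-zero part $X$ is \emph{not} the identity and the whole construction leans on the marked point $P$ to kill most configurations via the filtration-type property ($P_\calC(a)=0$ when $x(P)$ divides $a$, and $y(P)$ divides the image); in your setting there is no marked point, so none of that case-killing machinery is available, and the analogues of Lemmas~\ref{deformlem} and~\ref{decoratelem} cannot simply be ``mirrored'' --- their proofs use the marked circle at every turn. Second, the proposed shortcut via the conjugation and duality rules is speculative as stated: the duality rule relates $d_\calC$ to the dual configuration $\calC^*$, not to the mirror $m(\calC)$, and if these symmetries immediately yielded $E^k\cong E'^k$ one would expect this to be observed in \cite{szaboss} rather than left open. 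As it stands, the proposal identifies a plausible line of attack but contains no step that could be checked or salvaged into a proof of the conjecture.
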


\appendix
\section{Results}

\begin{scriptsize}
\[ \begin{array}{r|ccl}
  \text{Knot $K$} & \rdE^k & \rank \rdE^k & \rdP^k(q, t) = \sum_{i,j} (\rank \rdE^k_{i,j})t^iq^j \\
  \hline
  8_{19} & \rdE^{2} & 5 & q^{5}+t^{2}q^{9}+t^{3}q^{11}+t^{4}q^{11}+t^{5}q^{15} \\
  & \rdE^{3} & 3 & q^{5}+t^{3}q^{11}+t^{5}q^{15} \\
  9_{42} & \rdE^{2} & 9 & t^{-4}q^{-7}+t^{-3}q^{-5}+t^{-2}q^{-3}+2t^{-1}q^{-1}+q^{-1}+q+tq^{3}+t^{2}q^{5} \\
  & \rdE^{3} & 7 & t^{-4}q^{-7}+t^{-3}q^{-5}+2t^{-1}q^{-1}+q+tq^{3}+t^{2}q^{5} \\
  10_{124} & \rdE^{2} & 7 & q^{7}+t^{2}q^{11}+t^{3}q^{13}+t^{4}q^{13}+t^{5}q^{17}+t^{6}q^{17}+t^{7}q^{19} \\
  & \rdE^{3} & 3 & q^{7}+t^{3}q^{13}+t^{6}q^{17} \\
  & \rdE^{4} & 1 & q^{7} \\
  10_{128} & \rdE^{2} & 13 & q^{5}+tq^{7}+2t^{2}q^{9}+2t^{3}q^{11}+t^{4}q^{11}+2t^{4}q^{13}+2t^{5}q^{15}+t^{6}q^{17} \\
 & & & +t^{7}q^{19} \\
  & \rdE^{3} & 11 & q^{5}+tq^{7}+t^{2}q^{9}+2t^{3}q^{11}+2t^{4}q^{13}+2t^{5}q^{15}+t^{6}q^{17}+t^{7}q^{19} \\
  10_{132} & \rdE^{2} & 11 & t^{-7}q^{-15}+t^{-6}q^{-13}+t^{-5}q^{-11}+2t^{-4}q^{-9}+t^{-3}q^{-9}+t^{-3}q^{-7} \\
 & & & +t^{-2}q^{-7}+t^{-2}q^{-5}+t^{-1}q^{-3}+q^{-3} \\
  & \rdE^{3} & 5 & t^{-7}q^{-15}+t^{-6}q^{-13}+t^{-4}q^{-9}+t^{-3}q^{-7}+t^{-1}q^{-3} \\
  10_{136} & \rdE^{2} & 17 & t^{-3}q^{-9}+2t^{-2}q^{-7}+2t^{-1}q^{-5}+3q^{-3}+q^{-1}+3tq^{-1}+2t^{2}q \\
 & & & +2t^{3}q^{3}+t^{4}q^{5} \\
  & \rdE^{3} & 15 & t^{-3}q^{-9}+2t^{-2}q^{-7}+2t^{-1}q^{-5}+3q^{-3}+3tq^{-1}+t^{2}q+2t^{3}q^{3}+t^{4}q^{5} \\
  10_{139} & \rdE^{2} & 11 & q^{7}+t^{2}q^{11}+t^{3}q^{13}+t^{4}q^{13}+t^{5}q^{15}+t^{5}q^{17}+2t^{6}q^{17}+t^{7}q^{19} \\
 & & & +t^{8}q^{21}+t^{9}q^{23} \\
  & \rdE^{3} & 7 & q^{7}+t^{3}q^{13}+t^{5}q^{15}+2t^{6}q^{17}+t^{8}q^{21}+t^{9}q^{23} \\
  & \rdE^{4} & 5 & q^{7}+t^{5}q^{15}+t^{6}q^{17}+t^{8}q^{21}+t^{9}q^{23} \\
  10_{145} & \rdE^{2} & 13 & t^{-9}q^{-21}+t^{-8}q^{-19}+t^{-7}q^{-17}+2t^{-6}q^{-15}+t^{-5}q^{-15}+t^{-5}q^{-13} \\
 & & & +t^{-4}q^{-13}+t^{-4}q^{-11}+t^{-3}q^{-11}+t^{-3}q^{-9}+t^{-2}q^{-9}+q^{-5} \\
  & \rdE^{3} & 7 & t^{-9}q^{-21}+t^{-8}q^{-19}+t^{-6}q^{-15}+t^{-5}q^{-13}+t^{-3}q^{-11}+t^{-3}q^{-9}+q^{-5} \\
  & \rdE^{4} & 5 & t^{-9}q^{-21}+t^{-8}q^{-19}+t^{-5}q^{-13}+t^{-3}q^{-9}+q^{-5} \\
  10_{152} & \rdE^{2} & 19 & q^{7}+t^{2}q^{11}+t^{3}q^{13}+2t^{4}q^{13}+2t^{5}q^{15}+t^{5}q^{17}+3t^{6}q^{17}+3t^{7}q^{19} \\
 & & & +2t^{8}q^{21}+2t^{9}q^{23}+t^{10}q^{25} \\
  & \rdE^{3} & 15 & q^{7}+t^{3}q^{13}+t^{4}q^{13}+2t^{5}q^{15}+3t^{6}q^{17}+2t^{7}q^{19}+2t^{8}q^{21} \\
 & & & +2t^{9}q^{23}+t^{10}q^{25} \\
  & \rdE^{4} & 13 & q^{7}+t^{4}q^{13}+2t^{5}q^{15}+2t^{6}q^{17}+2t^{7}q^{19}+2t^{8}q^{21}+2t^{9}q^{23}+t^{10}q^{25} \\
  10_{153} & \rdE^{2} & 17 & t^{-5}q^{-11}+t^{-4}q^{-9}+t^{-3}q^{-7}+2t^{-2}q^{-5}+t^{-1}q^{-5}+t^{-1}q^{-3} \\
 & & & +q^{-3}+2q^{-1}+tq^{-1}+tq+2t^{2}q+t^{3}q^{3}+t^{4}q^{5}+t^{5}q^{7} \\
  & \rdE^{3} & 9 & t^{-5}q^{-11}+t^{-4}q^{-9}+t^{-2}q^{-5}+t^{-1}q^{-3}+q^{-1}+tq^{-1}+t^{2}q \\
 & & & +t^{4}q^{5}+t^{5}q^{7} \\
  & \rdE^{4} & 5 & t^{-5}q^{-11}+t^{-4}q^{-9}+q^{-1}+t^{4}q^{5}+t^{5}q^{7} \\
  10_{154} & \rdE^{2} & 21 & q^{5}+t^{2}q^{9}+t^{3}q^{9}+t^{3}q^{11}+3t^{4}q^{11}+2t^{5}q^{13}+t^{5}q^{15}+3t^{6}q^{15} \\
 & & & +3t^{7}q^{17}+2t^{8}q^{19}+2t^{9}q^{21}+t^{10}q^{23} \\
  & \rdE^{3} & 17 & q^{5}+t^{3}q^{9}+t^{3}q^{11}+2t^{4}q^{11}+2t^{5}q^{13}+3t^{6}q^{15}+2t^{7}q^{17} \\
 & & & +2t^{8}q^{19}+2t^{9}q^{21}+t^{10}q^{23} \\
  & \rdE^{4} & 15 & q^{5}+t^{3}q^{9}+2t^{4}q^{11}+2t^{5}q^{13}+2t^{6}q^{15}+2t^{7}q^{17}+2t^{8}q^{19} \\
 & & & +2t^{9}q^{21}+t^{10}q^{23} \\
  10_{161} & \rdE^{2} & 13 & q^{5}+t^{2}q^{9}+t^{3}q^{9}+t^{3}q^{11}+2t^{4}q^{11}+t^{5}q^{13}+t^{5}q^{15}+2t^{6}q^{15} \\
 & & & +t^{7}q^{17}+t^{8}q^{19}+t^{9}q^{21} \\
  & \rdE^{3} & 9 & q^{5}+t^{3}q^{9}+t^{3}q^{11}+t^{4}q^{11}+t^{5}q^{13}+2t^{6}q^{15}+t^{8}q^{19}+t^{9}q^{21} \\
  & \rdE^{4} & 7 & q^{5}+t^{3}q^{9}+t^{4}q^{11}+t^{5}q^{13}+t^{6}q^{15}+t^{8}q^{19}+t^{9}q^{21} \\
\end{array} \]
\end{scriptsize}
\begin{scriptsize}
\[ \begin{array}{r|ccl}
  \text{Knot $K$} & \rdE^k & \rank \rdE^k & \rdP^k(q, t) = \sum_{i,j} (\rank \rdE^k_{i,j})t^iq^j \\
  \hline
  11n6 & \rdE^{2} & 33 & t^{-7}q^{-15}+2t^{-6}q^{-13}+3t^{-5}q^{-11}+4t^{-4}q^{-9}+4t^{-3}q^{-7}+t^{-2}q^{-7} \\
 & & & +4t^{-2}q^{-5}+t^{-1}q^{-5}+3t^{-1}q^{-3}+q^{-3}+3q^{-1}+2tq^{-1} \\
 & & & +tq+t^{2}q+t^{3}q^{3}+t^{4}q^{5} \\
  & \rdE^{3} & 25 & t^{-7}q^{-15}+2t^{-6}q^{-13}+3t^{-5}q^{-11}+3t^{-4}q^{-9}+3t^{-3}q^{-7}+4t^{-2}q^{-5} \\
 & & & +2t^{-1}q^{-3}+q^{-3}+2q^{-1}+tq^{-1}+tq+t^{3}q^{3}+t^{4}q^{5} \\
  & \rdE^{4} & 21 & t^{-7}q^{-15}+2t^{-6}q^{-13}+3t^{-5}q^{-11}+3t^{-4}q^{-9}+2t^{-3}q^{-7}+3t^{-2}q^{-5} \\
 & & & +2t^{-1}q^{-3}+2q^{-1}+tq+t^{3}q^{3}+t^{4}q^{5} \\
  11n9 & \rdE^{2} & 29 & t^{-9}q^{-23}+2t^{-8}q^{-21}+2t^{-7}q^{-19}+3t^{-6}q^{-17}+t^{-5}q^{-17}+3t^{-5}q^{-15} \\
 & & & +t^{-4}q^{-15}+3t^{-4}q^{-13}+2t^{-3}q^{-13}+2t^{-3}q^{-11}+2t^{-2}q^{-11} \\
 & & & +t^{-2}q^{-9}+2t^{-1}q^{-9}+2q^{-7}+tq^{-5}+t^{2}q^{-3} \\
  & \rdE^{3} & 17 & t^{-9}q^{-23}+2t^{-8}q^{-21}+t^{-7}q^{-19}+2t^{-6}q^{-17}+2t^{-5}q^{-15}+2t^{-4}q^{-13} \\
 & & & +t^{-3}q^{-13}+t^{-3}q^{-11}+t^{-2}q^{-11}+t^{-1}q^{-9}+q^{-7}+tq^{-5}+t^{2}q^{-3} \\
  & \rdE^{4} & 11 & t^{-9}q^{-23}+2t^{-8}q^{-21}+t^{-7}q^{-19}+t^{-6}q^{-17}+t^{-5}q^{-15}+t^{-4}q^{-13} \\
 & & & +t^{-3}q^{-11}+q^{-7}+tq^{-5}+t^{2}q^{-3} \\
  11n12 & \rdE^{2} & 19 & t^{-7}q^{-15}+2t^{-6}q^{-13}+2t^{-5}q^{-11}+3t^{-4}q^{-9}+t^{-3}q^{-9}+3t^{-3}q^{-7} \\
 & & & +t^{-2}q^{-7}+2t^{-2}q^{-5}+2t^{-1}q^{-3}+q^{-3}+q^{-1} \\
  & \rdE^{3} & 13 & t^{-7}q^{-15}+2t^{-6}q^{-13}+t^{-5}q^{-11}+2t^{-4}q^{-9}+3t^{-3}q^{-7}+t^{-2}q^{-5} \\
 & & & +2t^{-1}q^{-3}+q^{-1} \\
  11n19 & \rdE^{2} & 11 & t^{-3}q^{-9}+t^{-2}q^{-9}+t^{-2}q^{-7}+t^{-1}q^{-7}+q^{-5}+q^{-3}+2tq^{-3} \\
 & & & +t^{2}q^{-1}+t^{3}q+t^{4}q^{3} \\
  & \rdE^{3} & 7 & t^{-2}q^{-9}+t^{-2}q^{-7}+q^{-5}+2tq^{-3}+t^{3}q+t^{4}q^{3} \\
  & \rdE^{4} & 5 & t^{-2}q^{-9}+q^{-5}+tq^{-3}+t^{3}q+t^{4}q^{3} \\
  11n20 & \rdE^{2} & 25 & t^{-4}q^{-11}+2t^{-3}q^{-9}+3t^{-2}q^{-7}+4t^{-1}q^{-5}+4q^{-3}+q^{-1} \\
 & & & +4tq^{-1}+3t^{2}q+2t^{3}q^{3}+t^{4}q^{5} \\
  & \rdE^{3} & 23 & t^{-4}q^{-11}+2t^{-3}q^{-9}+3t^{-2}q^{-7}+4t^{-1}q^{-5}+4q^{-3}+4tq^{-1} \\
 & & & +2t^{2}q+2t^{3}q^{3}+t^{4}q^{5} \\
  11n24 & \rdE^{2} & 25 & t^{-3}q^{-9}+2t^{-2}q^{-7}+3t^{-1}q^{-5}+4q^{-3}+q^{-1}+4tq^{-1}+4t^{2}q \\
 & & & +3t^{3}q^{3}+2t^{4}q^{5}+t^{5}q^{7} \\
  & \rdE^{3} & 23 & t^{-3}q^{-9}+2t^{-2}q^{-7}+3t^{-1}q^{-5}+4q^{-3}+4tq^{-1}+3t^{2}q \\
 & & & +3t^{3}q^{3}+2t^{4}q^{5}+t^{5}q^{7} \\
  11n27 & \rdE^{2} & 21 & t^{-6}q^{-19}+2t^{-5}q^{-17}+2t^{-4}q^{-15}+t^{-4}q^{-13}+4t^{-3}q^{-13}+3t^{-2}q^{-11} \\
 & & & +3t^{-1}q^{-9}+3q^{-7}+tq^{-5}+t^{2}q^{-3} \\
  & \rdE^{3} & 19 & t^{-6}q^{-19}+2t^{-5}q^{-17}+2t^{-4}q^{-15}+4t^{-3}q^{-13}+2t^{-2}q^{-11}+3t^{-1}q^{-9} \\
 & & & +3q^{-7}+tq^{-5}+t^{2}q^{-3} \\
  11n31 & \rdE^{2} & 29 & t^{-9}q^{-21}+2t^{-8}q^{-19}+2t^{-7}q^{-17}+3t^{-6}q^{-15}+t^{-5}q^{-15}+3t^{-5}q^{-13} \\
 & & & +2t^{-4}q^{-13}+2t^{-4}q^{-11}+2t^{-3}q^{-11}+2t^{-3}q^{-9}+2t^{-2}q^{-9} \\
 & & & +t^{-2}q^{-7}+2t^{-1}q^{-7}+2q^{-5}+tq^{-3}+t^{2}q^{-1} \\
  & \rdE^{3} & 15 & t^{-9}q^{-21}+2t^{-8}q^{-19}+t^{-7}q^{-17}+t^{-6}q^{-15}+2t^{-5}q^{-13}+t^{-4}q^{-11} \\
 & & & +t^{-3}q^{-11}+t^{-3}q^{-9}+t^{-2}q^{-9}+t^{-1}q^{-7}+q^{-5}+tq^{-3}+t^{2}q^{-1} \\
  & \rdE^{4} & 9 & t^{-9}q^{-21}+2t^{-8}q^{-19}+t^{-7}q^{-17}+t^{-5}q^{-13}+t^{-3}q^{-9}+q^{-5} \\
 & & & +tq^{-3}+t^{2}q^{-1} \\
  11n34 & \rdE^{2} & 33 & t^{-5}q^{-9}+2t^{-4}q^{-7}+2t^{-3}q^{-5}+3t^{-2}q^{-3}+t^{-1}q^{-3}+3t^{-1}q^{-1} \\
 & & & +3q^{-1}+2q+2tq+2tq^{3}+3t^{2}q^{3}+t^{2}q^{5}+3t^{3}q^{5} \\
 & & & +2t^{4}q^{7}+2t^{5}q^{9}+t^{6}q^{11} \\
  & \rdE^{3} & 17 & t^{-5}q^{-9}+2t^{-4}q^{-7}+t^{-3}q^{-5}+t^{-2}q^{-3}+2t^{-1}q^{-1}+q^{-1} \\
 & & & +q+tq+2t^{2}q^{3}+t^{3}q^{5}+t^{4}q^{7}+2t^{5}q^{9}+t^{6}q^{11} \\
  & \rdE^{4} & 9 & t^{-5}q^{-9}+2t^{-4}q^{-7}+t^{-3}q^{-5}+q^{-1}+t^{4}q^{7}+2t^{5}q^{9}+t^{6}q^{11} \\
  11n38 & \rdE^{2} & 13 & t^{-6}q^{-11}+t^{-5}q^{-9}+t^{-4}q^{-7}+2t^{-3}q^{-5}+t^{-2}q^{-5}+t^{-2}q^{-3} \\
 & & & +t^{-1}q^{-3}+t^{-1}q^{-1}+q^{-1}+q+tq+t^{2}q^{3} \\
  & \rdE^{3} & 5 & t^{-6}q^{-11}+t^{-5}q^{-9}+t^{-3}q^{-5}+t^{-2}q^{-3}+q^{-1} \\
  & \rdE^{4} & 3 & t^{-6}q^{-11}+t^{-5}q^{-9}+t^{-2}q^{-3} \\
\end{array} \]
\end{scriptsize}
\begin{scriptsize}
\[ \begin{array}{r|ccl}
  \text{Knot $K$} & \rdE^k & \rank \rdE^k & \rdP^k(q, t) = \sum_{i,j} (\rank \rdE^k_{i,j})t^iq^j \\
  \hline
  11n39 & \rdE^{2} & 41 & t^{-5}q^{-9}+t^{-4}q^{-7}+t^{-3}q^{-5}+t^{-2}q^{-5}+2t^{-2}q^{-3}+3t^{-1}q^{-3} \\
 & & & +t^{-1}q^{-1}+5q^{-1}+q+5tq+tq^{3}+6t^{2}q^{3}+5t^{3}q^{5} \\
 & & & +4t^{4}q^{7}+3t^{5}q^{9}+t^{6}q^{11} \\
  & \rdE^{3} & 33 & t^{-5}q^{-9}+t^{-4}q^{-7}+t^{-2}q^{-5}+t^{-2}q^{-3}+2t^{-1}q^{-3}+t^{-1}q^{-1} \\
 & & & +4q^{-1}+5tq+5t^{2}q^{3}+4t^{3}q^{5}+4t^{4}q^{7}+3t^{5}q^{9}+t^{6}q^{11} \\
  & \rdE^{4} & 29 & t^{-5}q^{-9}+t^{-4}q^{-7}+t^{-2}q^{-5}+2t^{-1}q^{-3}+4q^{-1}+4tq+4t^{2}q^{3} \\
 & & & +4t^{3}q^{5}+4t^{4}q^{7}+3t^{5}q^{9}+t^{6}q^{11} \\
  11n42 & \rdE^{2} & 33 & t^{-6}q^{-13}+2t^{-5}q^{-11}+2t^{-4}q^{-9}+3t^{-3}q^{-7}+t^{-2}q^{-7}+3t^{-2}q^{-5} \\
 & & & +2t^{-1}q^{-5}+2t^{-1}q^{-3}+2q^{-3}+3q^{-1}+3tq^{-1}+tq+3t^{2}q \\
 & & & +2t^{3}q^{3}+2t^{4}q^{5}+t^{5}q^{7} \\
  & \rdE^{3} & 17 & t^{-6}q^{-13}+2t^{-5}q^{-11}+t^{-4}q^{-9}+t^{-3}q^{-7}+2t^{-2}q^{-5}+t^{-1}q^{-3} \\
 & & & +q^{-3}+q^{-1}+2tq^{-1}+t^{2}q+t^{3}q^{3}+2t^{4}q^{5}+t^{5}q^{7} \\
  & \rdE^{4} & 9 & t^{-6}q^{-13}+2t^{-5}q^{-11}+t^{-4}q^{-9}+q^{-1}+t^{3}q^{3}+2t^{4}q^{5}+t^{5}q^{7} \\
  11n45 & \rdE^{2} & 41 & t^{-6}q^{-13}+3t^{-5}q^{-11}+4t^{-4}q^{-9}+5t^{-3}q^{-7}+6t^{-2}q^{-5}+t^{-1}q^{-5} \\
 & & & +5t^{-1}q^{-3}+q^{-3}+5q^{-1}+tq^{-1}+3tq+2t^{2}q+t^{2}q^{3} \\
 & & & +t^{3}q^{3}+t^{4}q^{5}+t^{5}q^{7} \\
  & \rdE^{3} & 33 & t^{-6}q^{-13}+3t^{-5}q^{-11}+4t^{-4}q^{-9}+4t^{-3}q^{-7}+5t^{-2}q^{-5}+5t^{-1}q^{-3} \\
 & & & +4q^{-1}+tq^{-1}+2tq+t^{2}q+t^{2}q^{3}+t^{4}q^{5}+t^{5}q^{7} \\
  & \rdE^{4} & 29 & t^{-6}q^{-13}+3t^{-5}q^{-11}+4t^{-4}q^{-9}+4t^{-3}q^{-7}+4t^{-2}q^{-5}+4t^{-1}q^{-3} \\
 & & & +4q^{-1}+2tq+t^{2}q^{3}+t^{4}q^{5}+t^{5}q^{7} \\
  11n49 & \rdE^{2} & 17 & t^{-6}q^{-11}+t^{-5}q^{-9}+t^{-4}q^{-7}+2t^{-3}q^{-5}+t^{-2}q^{-5}+t^{-2}q^{-3} \\
 & & & +t^{-1}q^{-3}+t^{-1}q^{-1}+2q^{-1}+q+2tq+t^{2}q^{3}+t^{3}q^{5} +t^{4}q^{7} \\
  & \rdE^{3} & 9 & t^{-6}q^{-11}+t^{-5}q^{-9}+t^{-3}q^{-5}+t^{-2}q^{-3}+2q^{-1}+tq+t^{3}q^{5} +t^{4}q^{7} \\
  & \rdE^{4} & 5 & t^{-6}q^{-11}+t^{-5}q^{-9}+q^{-1}+t^{3}q^{5}+t^{4}q^{7} \\
  11n57 & \rdE^{2} & 17 & t^{-6}q^{-17}+t^{-5}q^{-17}+t^{-5}q^{-15}+t^{-4}q^{-15}+t^{-4}q^{-13}+2t^{-3}q^{-13} \\
 & & & +t^{-3}q^{-11}+2t^{-2}q^{-11}+t^{-2}q^{-9}+2t^{-1}q^{-9}+2q^{-7}+tq^{-5}+t^{2}q^{-3} \\
  & \rdE^{3} & 7 & t^{-5}q^{-17}+t^{-3}q^{-13}+t^{-2}q^{-11}+t^{-1}q^{-9}+q^{-7}+tq^{-5}+t^{2}q^{-3} \\
  11n61 & \rdE^{2} & 23 & t^{-5}q^{-15}+t^{-4}q^{-13}+t^{-4}q^{-11}+3t^{-3}q^{-11}+3t^{-2}q^{-9}+t^{-2}q^{-7} \\
 & & & +3t^{-1}q^{-7}+t^{-1}q^{-5}+4q^{-5}+2tq^{-3}+2t^{2}q^{-1}+t^{3}q \\
  & \rdE^{3} & 17 & t^{-5}q^{-15}+t^{-4}q^{-13}+3t^{-3}q^{-11}+2t^{-2}q^{-9}+3t^{-1}q^{-7}+3q^{-5} \\
 & & & +tq^{-3}+2t^{2}q^{-1}+t^{3}q \\
  11n67 & \rdE^{2} & 25 & t^{-7}q^{-15}+2t^{-6}q^{-13}+2t^{-5}q^{-11}+3t^{-4}q^{-9}+3t^{-3}q^{-7}+t^{-2}q^{-7} \\
 & & & +2t^{-2}q^{-5}+t^{-1}q^{-5}+2t^{-1}q^{-3}+q^{-3}+2q^{-1}+2tq^{-1} \\
 & & & +t^{2}q+t^{3}q^{3}+t^{4}q^{5} \\
  & \rdE^{3} & 17 & t^{-7}q^{-15}+2t^{-6}q^{-13}+2t^{-5}q^{-11}+2t^{-4}q^{-9}+2t^{-3}q^{-7}+2t^{-2}q^{-5} \\
 & & & +t^{-1}q^{-3}+q^{-3}+q^{-1}+tq^{-1}+t^{3}q^{3}+t^{4}q^{5} \\
  & \rdE^{4} & 13 & t^{-7}q^{-15}+2t^{-6}q^{-13}+2t^{-5}q^{-11}+2t^{-4}q^{-9}+t^{-3}q^{-7}+t^{-2}q^{-5} \\
 & & & +t^{-1}q^{-3}+q^{-1}+t^{3}q^{3}+t^{4}q^{5} \\
  11n70 & \rdE^{2} & 19 & t^{-4}q^{-5}+t^{-3}q^{-3}+2t^{-2}q^{-1}+3t^{-1}q+q+2q^{3}+3tq^{5} \\
 & & & +t^{2}q^{5}+2t^{2}q^{7}+t^{3}q^{7}+t^{3}q^{9}+t^{4}q^{11} \\
  & \rdE^{3} & 13 & t^{-4}q^{-5}+t^{-3}q^{-3}+t^{-2}q^{-1}+3t^{-1}q+q^{3}+2tq^{5}+2t^{2}q^{7} \\
 & & & +t^{3}q^{9}+t^{4}q^{11} \\
  11n73 & \rdE^{2} & 25 & t^{-6}q^{-13}+2t^{-5}q^{-11}+2t^{-4}q^{-9}+3t^{-3}q^{-7}+3t^{-2}q^{-5}+t^{-1}q^{-5} \\
 & & & +2t^{-1}q^{-3}+q^{-3}+3q^{-1}+tq^{-1}+tq+2t^{2}q+t^{3}q^{3} \\
 & & & +t^{4}q^{5}+t^{5}q^{7} \\
  & \rdE^{3} & 17 & t^{-6}q^{-13}+2t^{-5}q^{-11}+2t^{-4}q^{-9}+2t^{-3}q^{-7}+2t^{-2}q^{-5}+2t^{-1}q^{-3} \\
 & & & +2q^{-1}+tq^{-1}+t^{2}q+t^{4}q^{5}+t^{5}q^{7} \\
  & \rdE^{4} & 13 & t^{-6}q^{-13}+2t^{-5}q^{-11}+2t^{-4}q^{-9}+2t^{-3}q^{-7}+t^{-2}q^{-5}+t^{-1}q^{-3} \\
 & & & +2q^{-1}+t^{4}q^{5}+t^{5}q^{7} \\
\end{array} \]
\end{scriptsize}
\begin{scriptsize}
\[ \begin{array}{r|ccl}
  \text{Knot $K$} & \rdE^k & \rank \rdE^k & \rdP^k(q, t) = \sum_{i,j} (\rank \rdE^k_{i,j})t^iq^j \\
  \hline
  11n74 & \rdE^{2} & 25 & t^{-5}q^{-9}+t^{-4}q^{-7}+t^{-3}q^{-5}+2t^{-2}q^{-3}+t^{-1}q^{-3}+t^{-1}q^{-1} \\
 & & & +3q^{-1}+q+2tq+tq^{3}+3t^{2}q^{3}+3t^{3}q^{5}+2t^{4}q^{7}+2t^{5}q^{9} +t^{6}q^{11} \\
  & \rdE^{3} & 17 & t^{-5}q^{-9}+t^{-4}q^{-7}+t^{-2}q^{-3}+t^{-1}q^{-1}+2q^{-1}+2tq+2t^{2}q^{3} \\
 & & & +2t^{3}q^{5}+2t^{4}q^{7}+2t^{5}q^{9}+t^{6}q^{11} \\
  & \rdE^{4} & 13 & t^{-5}q^{-9}+t^{-4}q^{-7}+2q^{-1}+tq+t^{2}q^{3}+2t^{3}q^{5}+2t^{4}q^{7} \\
 & & & +2t^{5}q^{9}+t^{6}q^{11} \\
  11n77 & \rdE^{2} & 35 & t^{-11}q^{-29}+3t^{-10}q^{-27}+4t^{-9}q^{-25}+5t^{-8}q^{-23}+6t^{-7}q^{-21}+5t^{-6}q^{-19} \\
 & & & +t^{-5}q^{-19}+4t^{-5}q^{-17}+3t^{-4}q^{-15}+t^{-3}q^{-15}+t^{-2}q^{-13}+q^{-9} \\
  & \rdE^{3} & 31 & t^{-11}q^{-29}+3t^{-10}q^{-27}+4t^{-9}q^{-25}+5t^{-8}q^{-23}+5t^{-7}q^{-21}+5t^{-6}q^{-19} \\
 & & & +4t^{-5}q^{-17}+2t^{-4}q^{-15}+t^{-3}q^{-15}+q^{-9} \\
  & \rdE^{4} & 29 & t^{-11}q^{-29}+3t^{-10}q^{-27}+4t^{-9}q^{-25}+5t^{-8}q^{-23}+5t^{-7}q^{-21}+4t^{-6}q^{-19} \\
 & & & +4t^{-5}q^{-17}+2t^{-4}q^{-15}+q^{-9} \\
  11n79 & \rdE^{2} & 17 & t^{-4}q^{-7}+t^{-3}q^{-5}+2t^{-2}q^{-3}+3t^{-1}q^{-1}+q^{-1}+2q+3tq^{3} \\
 & & & +2t^{2}q^{5}+t^{3}q^{7}+t^{4}q^{9} \\
  & \rdE^{3} & 15 & t^{-4}q^{-7}+t^{-3}q^{-5}+t^{-2}q^{-3}+3t^{-1}q^{-1}+2q+3tq^{3}+2t^{2}q^{5} \\
 & & & +t^{3}q^{7}+t^{4}q^{9} \\
  11n80 & \rdE^{2} & 31 & t^{-7}q^{-17}+2t^{-6}q^{-15}+3t^{-5}q^{-13}+4t^{-4}q^{-11}+4t^{-3}q^{-9}+t^{-2}q^{-9} \\
 & & & +4t^{-2}q^{-7}+t^{-1}q^{-7}+3t^{-1}q^{-5}+q^{-5}+2q^{-3}+2tq^{-3} \\
 & & & +t^{2}q^{-1}+t^{3}q+t^{4}q^{3} \\
  & \rdE^{3} & 23 & t^{-7}q^{-17}+2t^{-6}q^{-15}+3t^{-5}q^{-13}+3t^{-4}q^{-11}+3t^{-3}q^{-9}+4t^{-2}q^{-7} \\
 & & & +2t^{-1}q^{-5}+q^{-5}+q^{-3}+tq^{-3}+t^{3}q+t^{4}q^{3} \\
  & \rdE^{4} & 19 & t^{-7}q^{-17}+2t^{-6}q^{-15}+3t^{-5}q^{-13}+3t^{-4}q^{-11}+2t^{-3}q^{-9}+3t^{-2}q^{-7} \\
 & & & +2t^{-1}q^{-5}+q^{-3}+t^{3}q+t^{4}q^{3} \\
  11n81 & \rdE^{2} & 29 & 2t^{-6}q^{-19}+3t^{-5}q^{-17}+3t^{-4}q^{-15}+t^{-4}q^{-13}+6t^{-3}q^{-13}+4t^{-2}q^{-11} \\
 & & & +4t^{-1}q^{-9}+4q^{-7}+tq^{-5}+t^{2}q^{-3} \\
  & \rdE^{3} & 27 & 2t^{-6}q^{-19}+3t^{-5}q^{-17}+3t^{-4}q^{-15}+6t^{-3}q^{-13}+3t^{-2}q^{-11}+4t^{-1}q^{-9} \\
 & & & +4q^{-7}+tq^{-5}+t^{2}q^{-3} \\
  11n88 & \rdE^{2} & 13 & t^{-5}q^{-17}+t^{-4}q^{-15}+t^{-4}q^{-13}+2t^{-3}q^{-13}+2t^{-2}q^{-11}+2t^{-1}q^{-9} \\
 & & & +2q^{-7}+tq^{-5}+t^{2}q^{-3} \\
  & \rdE^{3} & 11 & t^{-5}q^{-17}+t^{-4}q^{-15}+2t^{-3}q^{-13}+t^{-2}q^{-11}+2t^{-1}q^{-9}+2q^{-7} \\
 & & & +tq^{-5}+t^{2}q^{-3} \\
  11n92 & \rdE^{2} & 17 & t^{-5}q^{-9}+2t^{-4}q^{-7}+2t^{-3}q^{-5}+3t^{-2}q^{-3}+3t^{-1}q^{-1}+q^{-1} \\
 & & & +2q+2tq^{3}+t^{2}q^{5} \\
  & \rdE^{3} & 15 & t^{-5}q^{-9}+2t^{-4}q^{-7}+2t^{-3}q^{-5}+2t^{-2}q^{-3}+3t^{-1}q^{-1}+2q \\
 & & & +2tq^{3}+t^{2}q^{5} \\
  11n96 & \rdE^{2} & 25 & t^{-5}q^{-9}+2t^{-4}q^{-7}+2t^{-3}q^{-5}+3t^{-2}q^{-3}+t^{-1}q^{-3}+3t^{-1}q^{-1} \\
 & & & +2q^{-1}+2q+tq+2tq^{3}+2t^{2}q^{3}+t^{2}q^{5}+t^{3}q^{5}+t^{4}q^{7} \\
 & & & +t^{5}q^{9} \\
  & \rdE^{3} & 15 & t^{-5}q^{-9}+2t^{-4}q^{-7}+t^{-3}q^{-5}+t^{-2}q^{-3}+3t^{-1}q^{-1}+q \\
 & & & +tq+tq^{3}+t^{2}q^{3}+t^{2}q^{5}+t^{4}q^{7}+t^{5}q^{9} \\
  & \rdE^{4} & 11 & t^{-5}q^{-9}+2t^{-4}q^{-7}+t^{-3}q^{-5}+2t^{-1}q^{-1}+q+tq^{3}+t^{2}q^{5} \\
 & & & +t^{4}q^{7}+t^{5}q^{9} \\
  11n97 & \rdE^{2} & 25 & t^{-5}q^{-11}+2t^{-4}q^{-9}+2t^{-3}q^{-7}+3t^{-2}q^{-5}+3t^{-1}q^{-3}+q^{-3} \\
 & & & +3q^{-1}+tq^{-1}+2tq+t^{2}q+t^{2}q^{3}+2t^{3}q^{3}+t^{4}q^{5} \\
 & & & +t^{5}q^{7}+t^{6}q^{9} \\
  & \rdE^{3} & 17 & t^{-5}q^{-11}+2t^{-4}q^{-9}+2t^{-3}q^{-7}+2t^{-2}q^{-5}+2t^{-1}q^{-3}+3q^{-1} \\
 & & & +tq+t^{2}q+t^{3}q^{3}+t^{5}q^{7}+t^{6}q^{9} \\
  & \rdE^{4} & 13 & t^{-5}q^{-11}+2t^{-4}q^{-9}+2t^{-3}q^{-7}+2t^{-2}q^{-5}+t^{-1}q^{-3}+2q^{-1} \\
 & & & +tq+t^{5}q^{7}+t^{6}q^{9} \\
\end{array} \]
\end{scriptsize}
\begin{scriptsize}
\[ \begin{array}{r|ccl}
  \text{Knot $K$} & \rdE^k & \rank \rdE^k & \rdP^k(q, t) = \sum_{i,j} (\rank \rdE^k_{i,j})t^iq^j \\
  \hline
  11n102 & \rdE^{2} & 19 & t^{-8}q^{-17}+t^{-7}q^{-15}+t^{-6}q^{-13}+2t^{-5}q^{-11}+t^{-4}q^{-11}+t^{-4}q^{-9} \\
 & & & +2t^{-3}q^{-9}+t^{-3}q^{-7}+2t^{-2}q^{-7}+t^{-2}q^{-5}+2t^{-1}q^{-5} \\
 & & & +2q^{-3}+tq^{-1}+t^{2}q \\
  & \rdE^{3} & 11 & t^{-8}q^{-17}+t^{-7}q^{-15}+t^{-5}q^{-11}+t^{-4}q^{-9}+t^{-3}q^{-9}+2t^{-2}q^{-7} \\
 & & & +t^{-1}q^{-5}+q^{-3}+tq^{-1}+t^{2}q \\
  & \rdE^{4} & 7 & t^{-8}q^{-17}+t^{-7}q^{-15}+t^{-3}q^{-9}+t^{-2}q^{-7}+q^{-3}+tq^{-1}+t^{2}q \\
  11n104 & \rdE^{2} & 21 & t^{-8}q^{-21}+t^{-7}q^{-19}+t^{-6}q^{-17}+t^{-5}q^{-17}+2t^{-5}q^{-15}+t^{-4}q^{-15} \\
 & & & +2t^{-4}q^{-13}+2t^{-3}q^{-13}+t^{-3}q^{-11}+2t^{-2}q^{-11}+t^{-2}q^{-9} \\
 & & & +2t^{-1}q^{-9}+2q^{-7}+tq^{-5}+t^{2}q^{-3} \\
  & \rdE^{3} & 11 & t^{-8}q^{-21}+t^{-7}q^{-19}+t^{-5}q^{-17}+t^{-5}q^{-15}+t^{-4}q^{-13}+t^{-3}q^{-13} \\
 & & & +t^{-2}q^{-11}+t^{-1}q^{-9}+q^{-7}+tq^{-5}+t^{2}q^{-3} \\
  & \rdE^{4} & 7 & t^{-8}q^{-21}+t^{-7}q^{-19}+t^{-5}q^{-17}+t^{-3}q^{-13}+q^{-7}+tq^{-5}+t^{2}q^{-3} \\
  11n111 & \rdE^{2} & 23 & t^{-4}q^{-5}+t^{-3}q^{-3}+t^{-2}q^{-1}+2t^{-1}q+2q+q^{3}+2tq^{3} \\
 & & & +tq^{5}+3t^{2}q^{5}+t^{2}q^{7}+3t^{3}q^{7}+2t^{4}q^{9}+2t^{5}q^{11}+t^{6}q^{13} \\
  & \rdE^{3} & 15 & t^{-4}q^{-5}+t^{-3}q^{-3}+t^{-1}q+q+q^{3}+tq^{3}+3t^{2}q^{5}+2t^{3}q^{7} \\
 & & & +t^{4}q^{9}+2t^{5}q^{11}+t^{6}q^{13} \\
  & \rdE^{4} & 11 & t^{-4}q^{-5}+t^{-3}q^{-3}+q+tq^{3}+2t^{2}q^{5}+t^{3}q^{7}+t^{4}q^{9}+2t^{5}q^{11}+t^{6}q^{13} \\
  11n116 & \rdE^{2} & 17 & t^{-6}q^{-13}+t^{-5}q^{-11}+t^{-4}q^{-9}+2t^{-3}q^{-7}+t^{-2}q^{-7}+t^{-2}q^{-5} \\
 & & & +t^{-1}q^{-5}+t^{-1}q^{-3}+q^{-3}+2q^{-1}+2tq^{-1}+t^{2}q+t^{3}q^{3}+t^{4}q^{5} \\
  & \rdE^{3} & 9 & t^{-6}q^{-13}+t^{-5}q^{-11}+t^{-3}q^{-7}+t^{-2}q^{-5}+q^{-3}+q^{-1}+tq^{-1} \\
 & & & +t^{3}q^{3}+t^{4}q^{5} \\
  & \rdE^{4} & 5 & t^{-6}q^{-13}+t^{-5}q^{-11}+q^{-1}+t^{3}q^{3}+t^{4}q^{5} \\
  11n126 & \rdE^{2} & 29 & t^{-8}q^{-23}+3t^{-7}q^{-21}+3t^{-6}q^{-19}+5t^{-5}q^{-17}+5t^{-4}q^{-15}+t^{-4}q^{-13} \\
 & & & +4t^{-3}q^{-13}+4t^{-2}q^{-11}+2t^{-1}q^{-9}+q^{-7} \\
  & \rdE^{3} & 27 & t^{-8}q^{-23}+3t^{-7}q^{-21}+3t^{-6}q^{-19}+5t^{-5}q^{-17}+5t^{-4}q^{-15}+4t^{-3}q^{-13} \\
 & & & +3t^{-2}q^{-11}+2t^{-1}q^{-9}+q^{-7} \\
  11n133 & \rdE^{2} & 31 & t^{-5}q^{-15}+2t^{-4}q^{-13}+t^{-4}q^{-11}+4t^{-3}q^{-11}+4t^{-2}q^{-9}+t^{-2}q^{-7} \\
 & & & +5t^{-1}q^{-7}+t^{-1}q^{-5}+5q^{-5}+3tq^{-3}+3t^{2}q^{-1}+t^{3}q \\
  & \rdE^{3} & 25 & t^{-5}q^{-15}+2t^{-4}q^{-13}+4t^{-3}q^{-11}+3t^{-2}q^{-9}+5t^{-1}q^{-7}+4q^{-5} \\
 & & & +2tq^{-3}+3t^{2}q^{-1}+t^{3}q \\
  11n135 & \rdE^{2} & 21 & t^{-8}q^{-19}+t^{-7}q^{-17}+t^{-6}q^{-15}+t^{-5}q^{-15}+2t^{-5}q^{-13}+2t^{-4}q^{-13} \\
 & & & +t^{-4}q^{-11}+2t^{-3}q^{-11}+t^{-3}q^{-9}+2t^{-2}q^{-9}+t^{-2}q^{-7}+2t^{-1}q^{-7} \\
 & & & +2q^{-5}+tq^{-3}+t^{2}q^{-1} \\
  & \rdE^{3} & 13 & t^{-8}q^{-19}+t^{-7}q^{-17}+t^{-5}q^{-15}+t^{-5}q^{-13}+t^{-4}q^{-13}+t^{-4}q^{-11} \\
 & & & +t^{-3}q^{-11}+2t^{-2}q^{-9}+t^{-1}q^{-7}+q^{-5}+tq^{-3}+t^{2}q^{-1} \\
  & \rdE^{4} & 9 & t^{-8}q^{-19}+t^{-7}q^{-17}+t^{-5}q^{-15}+t^{-4}q^{-13}+t^{-3}q^{-11}+t^{-2}q^{-9} \\
 & & & +q^{-5}+tq^{-3}+t^{2}q^{-1} \\
  11n138 & \rdE^{2} & 17 & t^{-6}q^{-11}+t^{-5}q^{-9}+2t^{-4}q^{-7}+3t^{-3}q^{-5}+2t^{-2}q^{-3}+3t^{-1}q^{-1} \\
 & & & +q^{-1}+2q+tq^{3}+t^{2}q^{5} \\
  & \rdE^{3} & 15 & t^{-6}q^{-11}+t^{-5}q^{-9}+2t^{-4}q^{-7}+3t^{-3}q^{-5}+t^{-2}q^{-3}+3t^{-1}q^{-1} \\
 & & & +2q+tq^{3}+t^{2}q^{5} \\
  11n143 & \rdE^{2} & 25 & t^{-6}q^{-13}+2t^{-5}q^{-11}+2t^{-4}q^{-9}+3t^{-3}q^{-7}+t^{-2}q^{-7}+3t^{-2}q^{-5} \\
 & & & +t^{-1}q^{-5}+2t^{-1}q^{-3}+q^{-3}+3q^{-1}+2tq^{-1}+tq+t^{2}q \\
 & & & +t^{3}q^{3}+t^{4}q^{5} \\
  & \rdE^{3} & 17 & t^{-6}q^{-13}+2t^{-5}q^{-11}+t^{-4}q^{-9}+2t^{-3}q^{-7}+3t^{-2}q^{-5}+t^{-1}q^{-3} \\
 & & & +q^{-3}+2q^{-1}+tq^{-1}+tq+t^{3}q^{3}+t^{4}q^{5} \\
  & \rdE^{4} & 13 & t^{-6}q^{-13}+2t^{-5}q^{-11}+t^{-4}q^{-9}+t^{-3}q^{-7}+2t^{-2}q^{-5}+t^{-1}q^{-3} \\
 & & & +2q^{-1}+tq+t^{3}q^{3}+t^{4}q^{5} \\
\end{array} \]
\end{scriptsize}
\begin{scriptsize}
\[ \begin{array}{r|ccl}
  \text{Knot $K$} & \rdE^k & \rank \rdE^k & \rdP^k(q, t) = \sum_{i,j} (\rank \rdE^k_{i,j})t^iq^j \\
  \hline
  11n145 & \rdE^{2} & 25 & t^{-5}q^{-11}+2t^{-4}q^{-9}+2t^{-3}q^{-7}+3t^{-2}q^{-5}+t^{-1}q^{-5}+3t^{-1}q^{-3} \\
 & & & +q^{-3}+3q^{-1}+tq^{-1}+2tq+2t^{2}q+t^{2}q^{3}+t^{3}q^{3}+t^{4}q^{5}+t^{5}q^{7} \\
  & \rdE^{3} & 17 & t^{-5}q^{-11}+2t^{-4}q^{-9}+t^{-3}q^{-7}+2t^{-2}q^{-5}+3t^{-1}q^{-3}+2q^{-1} \\
 & & & +tq^{-1}+tq+t^{2}q+t^{2}q^{3}+t^{4}q^{5}+t^{5}q^{7} \\
  & \rdE^{4} & 13 & t^{-5}q^{-11}+2t^{-4}q^{-9}+t^{-3}q^{-7}+t^{-2}q^{-5}+2t^{-1}q^{-3}+2q^{-1} \\
 & & & +tq+t^{2}q^{3}+t^{4}q^{5}+t^{5}q^{7} \\
  11n151 & \rdE^{2} & 39 & t^{-4}q^{-5}+t^{-3}q^{-3}+t^{-2}q^{-1}+2t^{-1}q+3q+q^{3}+4tq^{3} \\
 & & & +tq^{5}+5t^{2}q^{5}+t^{2}q^{7}+6t^{3}q^{7}+5t^{4}q^{9}+4t^{5}q^{11}+3t^{6}q^{13}+t^{7}q^{15} \\
  & \rdE^{3} & 31 & t^{-4}q^{-5}+t^{-3}q^{-3}+t^{-1}q+2q+q^{3}+3tq^{3}+5t^{2}q^{5}+5t^{3}q^{7} \\
 & & & +4t^{4}q^{9}+4t^{5}q^{11}+3t^{6}q^{13}+t^{7}q^{15} \\
  & \rdE^{4} & 27 & t^{-4}q^{-5}+t^{-3}q^{-3}+2q+3tq^{3}+4t^{2}q^{5}+4t^{3}q^{7}+4t^{4}q^{9} \\
 & & & +4t^{5}q^{11}+3t^{6}q^{13}+t^{7}q^{15} \\
  11n152 & \rdE^{2} & 39 & t^{-7}q^{-17}+3t^{-6}q^{-15}+4t^{-5}q^{-13}+5t^{-4}q^{-11}+6t^{-3}q^{-9}+t^{-2}q^{-9} \\
 & & & +5t^{-2}q^{-7}+t^{-1}q^{-7}+4t^{-1}q^{-5}+q^{-5}+3q^{-3}+2tq^{-3} \\
 & & & +t^{2}q^{-1}+t^{3}q+t^{4}q^{3} \\
  & \rdE^{3} & 31 & t^{-7}q^{-17}+3t^{-6}q^{-15}+4t^{-5}q^{-13}+4t^{-4}q^{-11}+5t^{-3}q^{-9}+5t^{-2}q^{-7} \\
 & & & +3t^{-1}q^{-5}+q^{-5}+2q^{-3}+tq^{-3}+t^{3}q+t^{4}q^{3} \\
  & \rdE^{4} & 27 & t^{-7}q^{-17}+3t^{-6}q^{-15}+4t^{-5}q^{-13}+4t^{-4}q^{-11}+4t^{-3}q^{-9}+4t^{-2}q^{-7} \\
 & & & +3t^{-1}q^{-5}+2q^{-3}+t^{3}q+t^{4}q^{3} \\
  11n183 & \rdE^{2} & 29 & q^{5}+t^{2}q^{9}+2t^{3}q^{9}+t^{3}q^{11}+4t^{4}q^{11}+3t^{5}q^{13}+t^{5}q^{15}+5t^{6}q^{15} \\
 & & & +4t^{7}q^{17}+3t^{8}q^{19}+3t^{9}q^{21}+t^{10}q^{23} \\
  & \rdE^{3} & 25 & q^{5}+2t^{3}q^{9}+t^{3}q^{11}+3t^{4}q^{11}+3t^{5}q^{13}+5t^{6}q^{15}+3t^{7}q^{17} \\
 & & & +3t^{8}q^{19}+3t^{9}q^{21}+t^{10}q^{23} \\
  & \rdE^{4} & 23 & q^{5}+2t^{3}q^{9}+3t^{4}q^{11}+3t^{5}q^{13}+4t^{6}q^{15}+3t^{7}q^{17}+3t^{8}q^{19} \\
 & & & +3t^{9}q^{21}+t^{10}q^{23} \\
  T(3,3) & \rdE^{2} & 6 & t^{-4}q^{-9}+t^{-2}q^{-5}+t^{-1}q^{-3}+q^{-3}+2q^{-1} \\
  & \rdE^{3} & 4 & t^{-4}q^{-9}+t^{-1}q^{-3}+2q^{-1} \\
  T(3,4) & \rdE^{2} & 5 & q^{5}+t^{2}q^{9}+t^{3}q^{11}+t^{4}q^{11}+t^{5}q^{15} \\
  & \rdE^{3} & 3 & q^{5}+t^{3}q^{11}+t^{5}q^{15} \\
  T(4,4) & \rdE^{2} & 12 & t^{-6}q^{-10}+t^{-4}q^{-6}+t^{-3}q^{-4}+t^{-2}q^{-4}+t^{-1}+q^{-2}+3 \\
 & & & +t^{2}q^{2}+2t^{2}q^{4} \\
  & \rdE^{3} & 10 & t^{-6}q^{-10}+t^{-3}q^{-4}+t^{-1}+q^{-2}+3+t^{2}q^{2}+2t^{2}q^{4} \\
  & \rdE^{4} & 8 & t^{-6}q^{-10}+t^{-1}+q^{-2}+2+t^{2}q^{2}+2t^{2}q^{4} \\
  T(3,5) & \rdE^{2} & 7 & q^{7}+t^{2}q^{11}+t^{3}q^{13}+t^{4}q^{13}+t^{5}q^{17}+t^{6}q^{17}+t^{7}q^{19} \\
  & \rdE^{3} & 3 & q^{7}+t^{3}q^{13}+t^{6}q^{17} \\
  & \rdE^{4} & 1 & q^{7} \\
  T(4,5) & \rdE^{2} & 13 & q^{11}+t^{2}q^{15}+t^{3}q^{17}+t^{4}q^{17}+t^{5}q^{21}+t^{6}q^{19}+t^{6}q^{21}+t^{7}q^{21} \\
 & & & +t^{7}q^{23}+t^{8}q^{23}+t^{9}q^{25}+t^{9}q^{27}+t^{10}q^{27} \\
  & \rdE^{3} & 9 & q^{11}+t^{3}q^{17}+t^{6}q^{19}+t^{6}q^{21}+t^{7}q^{21}+t^{8}q^{23}+t^{9}q^{25}+t^{9}q^{27} \\
 & & & +t^{10}q^{27} \\
  & \rdE^{4} & 7 & q^{11}+t^{6}q^{19}+t^{7}q^{21}+t^{8}q^{23}+t^{9}q^{25}+t^{9}q^{27}+t^{10}q^{27} \\
  T(3,6) & \rdE^{2} & 10 & t^{-8}q^{-15}+t^{-6}q^{-11}+t^{-5}q^{-9}+t^{-4}q^{-9}+t^{-3}q^{-5}+t^{-2}q^{-5} \\
 & & & +t^{-1}q^{-3}+q^{-3}+2q^{-1} \\
  & \rdE^{3} & 6 & t^{-8}q^{-15}+t^{-5}q^{-9}+t^{-2}q^{-5}+q^{-3}+2q^{-1} \\
  & \rdE^{4} & 4 & t^{-8}q^{-15}+q^{-3}+2q^{-1} \\
  T(3,7) & \rdE^{2} & 9 & q^{11}+t^{2}q^{15}+t^{3}q^{17}+t^{4}q^{17}+t^{5}q^{21}+t^{6}q^{21}+t^{7}q^{23}+t^{8}q^{23} \\
 & & & +t^{9}q^{27} \\
  & \rdE^{3} & 5 & q^{11}+t^{3}q^{17}+t^{6}q^{21}+t^{8}q^{23}+t^{9}q^{27} \\
  & \rdE^{4} & 3 & q^{11}+t^{8}q^{23}+t^{9}q^{27} \\
\end{array} \]
\end{scriptsize}
\begin{scriptsize}
\[ \begin{array}{r|ccl}
  \text{Knot $K$} & \rdE^k & \rank \rdE^k & \rdP^k(q, t) = \sum_{i,j} (\rank \rdE^k_{i,j})t^iq^j \\
  \hline
  T(3,8) & \rdE^{2} & 11 & q^{13}+t^{2}q^{17}+t^{3}q^{19}+t^{4}q^{19}+t^{5}q^{23}+t^{6}q^{23}+t^{7}q^{25}+t^{8}q^{25} \\
 & & & +t^{9}q^{29}+t^{10}q^{29}+t^{11}q^{31} \\
  & \rdE^{3} & 7 & q^{13}+t^{3}q^{19}+t^{6}q^{23}+t^{8}q^{25}+t^{9}q^{29}+t^{10}q^{29}+t^{11}q^{31} \\
  & \rdE^{4} & 5 & q^{13}+t^{8}q^{25}+t^{9}q^{29}+t^{10}q^{29}+t^{11}q^{31} \\
\end{array} \]
\end{scriptsize}

\end{document}